\makeatletter \@addtoreset{equation}{section}
\newtheorem{theorem}{Theorem}[section]
\newtheorem{corollary}[theorem]{Corollary}
\newtheorem{definition}[theorem]{Definition}
\newtheorem{lemma}[theorem]{Lemma}
\newtheorem{proposition}[theorem]{Proposition}
\newtheorem{remark}[theorem]{Remark}
\newcommand{\example}{\vskip 3mm
\goodbreak\refstepcounter{theorem} \noindent
\hbox{{\bf Example \thetheorem.}~}
}
\def\pr1{\prod\hskip -2.07ex * \hskip 0.9 ex}
\newcommand{\ra}{\rightarrow}
\newcommand{\Rr}{\mathbb{R}}
\newcommand{\Div}{\mathrm{Div}}
\renewcommand{\div}{\mathrm{div}}
\renewcommand{\det}{\mathrm{det}}
\newcommand{\Aut}{\mathrm{Aut}}
\newcommand{\id}{\mathrm{id}}
\newcommand{\whp}{\widehat{\partial}}
\begin{document}

\noindent
\title{On a class of automorphisms in $\mathbb{H}^2$ which resemble
  the property of preserving volume} \thanks{} \author{Jasna Prezelj}
\address{Fakulteta za matematiko in fiziko Jadranska 21 1000
  Ljubljana, Slovenija, UP FAMNIT, Glagolja\v ska 8, Koper Slovenija}
\email { jasna.prezelj@fmf.uni-lj.si} \author {Fabio
  Vlacci}\address{Dipartimento di Matematica e Informatica ``U. Dini''
  - Universit\`a di Firenze Viale Morgagni 67/A, 50134\ Firenze,
  Italy} \email{ vlacci@math.unifi.it} \thanks{\rm The first author
  was partially supported by research program P1-0291 and by research
  projects J1-7256 and J1-9104 at Slovenian Research Agency. Part of
  the paper was written when the first author was visiting the DiMaI
  at University of Florence and she wishes to thank this institution
  for its hospitality.  The second author was partially supported by
  Progetto MIUR di Rilevante Interesse Nazionale PRIN 2010-11 {\it
    Variet\`a reali e complesse: geometria, topologia e analisi
    armonica}.  The research that led to the present paper was
  partially supported by a grant of the group GNSAGA of Istituto
  Nazionale di Alta Matematica ``F: Severi''.}

\begin{abstract}

We give a possible extension for shears and
overshears in the case of two non commutative (quaternionic) variables in relation with the associated vector fields and flows.
We present a possible definition of volume
preserving automorphisms, even though there is no quaternionic
 volume form on $\mathbb{H}^2.$

Using this, we determine a  class of quaternionic automorphisms for which  the
Ander-\-sen-Lempert theory applies. Finally, we exhibit an example of a quaternionic  automorphism, which
is not in the closure of the set of finite compositions of volume preserving quaternionic shears.

\vskip 0.3 cm

{\bf 2010 Mathematics Subject  Classification: 30G35, 58B10}

\end{abstract}
\maketitle

\section{Introduction}

Complex holomorphic shears and overshears represent the major tools
for the description of the groups of automorphisms of $\mathbb{C}^n$
with $n>1$.  In this paper, we give a possible extension for shears and
overshears in the case of two non-commutative variables.  In
particular, we investigate what are the minimal conditions
to define
 good generalizations of the complex
holomorphic  shears and overshears  in relation with the associated vector fields and flows in the non commutative (mainly quaternionic) setting.
To this end, we  restrict our research to mappings represented by convergent quaternionic power series.

Complex analytic shears are simple automorphisms with volume $1.$ Since there does not exist a quaternionic volume form on ${\mathbb H}^n,$ and since the automorphisms with convergent power series as components are not necessarily regular in the sense of \cite{GP}, the class of quaternionic automorphisms with volume $1$ is not defined.

We present an alternative definition of partial derivative, divergence and rotor for the quaternionic setting, and determine the subclasses of vector fields with divergence or rotor. Then, we define automorphisms with volume to be deformations of identity by vector fields with divergence, and we show that they present a proper class of automorphisms  for which  the
Andersen-Lempert theory applies.
In particular, shears and
overshears in this class are the quaternionic analogue of  complex holomorphic shears and overshears.

Finally, we exhibit an example of a quaternionic  automorphism, which
is not in the closure of the set of finite compositions of volume preserving quaternionic shears while its restriction to the complex variables is approximable by a finite composition of (complex) shears.

The paper is structured as follows:  Section $2$ contains the description of our setting with basic definitions and notions, such as partial derivatives, divergence, and rotor. Bidegree full functions are introduced. Section $3$ is devoted to vector fields and their properties, in particular it contains the crucial theorem (Theorem \ref{crcthm}) on vector fields with divergence. Section $4$ studies the connections between Jacobians of shears and  overshears and properties of the corresponding vector fields. Section $5$  presents the application of Andersen-Lempert theory in quaternionic setting with the above-mentioned example.

\section{Preliminaries on convergent quaternionic power series}\label{sec1}

In this section we introduce the basic concepts
and notions to deal with generalizations of complex holomorphic shears
and overshears, flows, and vector fields in the corresponding
quaternionic setting.


 We denote by $\mathbb{H}$ the algebra of quaternions. Let
 $\mathbb{S}$ be the sphere of imaginary quaternions, i.e. the set of
 quaternions $I$ such that $I^2=-1$.  Given any quaternion $z,$ there
 exist (and are uniquely determined) an imaginary unit $I,$ and two
 real numbers $x,y$ (with $y\geq 0$) such that $z=x+Iy$. With this
 notation, the conjugate of $z$ will be
 $\bar z := x-Iy$.  We consider the graded algebra of
 polynomials in the non commutative variables $z_1,\ldots, z_n.$
%
This algebra of polynomials  will be denoted by
$\mathbb{H}[z_1,\ldots,, z_n]$. In other words
\[\mathbb{H}[z_1,\ldots, z_n]=\bigoplus\limits_d  \mathbb{H}_d[z_1,\ldots, z_n]\]
where  $\mathbb{H}_d[z_1,\ldots, z_n]$ consists of finite linear combinations of
 monomials
in the variables $z_1,\ldots,z_n$ of degree $d$ over the  quaternions, namely monomials of the form
\begin{equation}\label{e2}
  a_0 * a_1 * \ldots * a_d, \; a_m \in \mathbb{H},\; \forall m,
\end{equation}
where each $*$ is replaced by one of the variables $z_1,\ldots, z_n.$
Notice that $\mathbb{H}_d[z_1,\ldots, z_n]$ consists of all homogeneous polynomials
in the variables $z_1,\ldots,z_n$ of degree $d$ over the  quaternions.
Our basic assumption on regularity, for the definition of the class of  quaternionic functions
we are interested, in is that any such  function $f$ has a  series expansion of the form
\begin{equation}\label{f}
f(z_1, \ldots, z_n)=\sum_d f_d(z_1,\ldots, z_n)
\end{equation}
with $f_d(z_1,\ldots, z_n)\in\mathbb{H}_d[z_1,\ldots, z_n]$ for any $d$, which converges absolutely.

The set of all such functions -- which turns out to be a right or left
$\mathbb{H}$-module -- will be denoted by $\mathcal{H}[z_1,\ldots,
  z_n]$.  Actually, we can restrict our considerations to the case in
which any $f_d(z_1,\ldots, z_n)$ is a sum of monomials of degree $d$
in the variables $z_1,\ldots,z_n$ whose coefficients $a_0,\ldots,
a_{d-1}$ (using the same notation as in (\ref{e2})) are all in $\mathbb{R}P^3 = S^3/\lbrace -1,1 \rbrace,$ which can be identified with
$\lbrace x = x_0 + x_1 i + x_2 j + x_3 k, \|x\| = 1, x_0 > 0 \mbox{ or } x_0 = 0, x_1 > 0 \mbox{ or } x_0,x_1 = 0, x_2 > 0 \mbox{ or } x = k \rbrace.$ This
fact guarantees formal uniqueness of the expansion in the right
$\mathbb{H}$ module $\mathcal{H}[z_1,\ldots, z_n]$.  We assume the
formal uniqueness of power series expansion of the functions
considered, namely, two such functions are the same iff
the corresponding power series coincide.  Furthermore
$\mathcal{H}[z_1,\ldots, z_n]$ can be considered as a ring with
respect to standard (pointwise) sum and (non commutative)
multiplication.

We remark that  $\mathcal{H}[z_1,\ldots, z_n]$
contains, as a particular case,
the right submodule of slice--regular functions $\mathcal{S}\mathcal{R}$
as introduced in \cite{GP}.
Another interesting subclass of functions in $\mathcal{H}[z_1,\ldots, z_n]$
(which also contains slice--regular functions)
is the one whose elements are functions as in (\ref{f})
such that each of the unitary coefficients $a_0,\ldots,
a_{d-1}$ of $f_d$ is exactly $1$.
This class will be denoted by $\mathcal{H}^{1}[z_1,\ldots, z_n]$.  In
the case of one variable $z_1 = z$ the class $\mathcal{H}^{1}[z] =
\mathcal{S}\mathcal{R}$; the notation $\mathcal{S}\mathcal{R}(D)$
refers to slice--regular functions defined on the open set $D \subset
\mathbb{H}.$ 

In general, there is no standard way of introducing a
notion of (partial) derivative for quaternionic functions (see for
instance \cite{GP,GS}).

We introduce new differential operators
$\widehat{\partial}_{z_j}$ on $\mathcal{H}[z_1,\ldots, z_n]$,
which can be interpreted as
new partial derivatives  for a convergent power series as in (\ref{f})
 with respect to each  of the  variables $z_1,\ldots, z_n$.

\begin{definition}
If $f$ is a convergent power series of variables $z_1,\ldots, z_n$,
for a given $j\in\mathbb{N}$, $1\leq j\leq n$ and (sufficiently small)
$h\in \mathbb{H}$ , we say that $\widehat{\partial}_{z_j}
f(z_1,\ldots, z_n)[h]$ is to be defined by the position

\[
  f(z_1, \ldots, z_j+h,\ldots, z_n) - f(z_1,\ldots, z_j, \ldots, z_n)
= \widehat{\partial}_{z_j}f(z_1,\ldots, z_n)[h] + o(\|h\|),
\]
or equivalently
\[
  \widehat{\partial}_{z_j}f(z_1,\ldots, z_n)[h]=\lim_{t \ra 0}
  \frac{1}{t}\left(f(z_1, \ldots, z_j+th,\ldots, z_n) - f(z_1,\ldots,
  z_j, \ldots, z_n)\right).
\]

\end{definition}

\noindent All the operators $\widehat{\partial}_{z_j}$
are additive and right--$\mathbb{H}$--linear.
Furthemore, the Leibniz rule holds.

In practice, each of the operators $\widehat{\partial}_{z_j}$
 acts by replacing a prescribed variable in each monomial of $f_d$
 with $h \in \mathbb{H}$ as in the following
example
\[
  \widehat{\partial}_{z_1}(z_1z_2z_1^2 z_2 a)[h]:= (h z_2 z_1^2 z_2  + z_1 z_2 h z_1 z_2  + z_1 z_2 z_1 h z_2 ) a.
\]

The following result, whose proof is somehow redundant,  motivates the introduction of
the differential operators $\widehat{\partial}_{z_j}$
on $\mathcal{H}[z_1,\ldots, z_n]$.
\begin{lemma}\label{lem1}
  If $\widehat{\partial}_{z_j}f(z_1,\ldots, z_n)\equiv 0$, then $f(z_1, \ldots, z_n)$ is (formally) independent of $z_j$.
\end{lemma}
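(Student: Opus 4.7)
The plan is to argue by formal uniqueness of the power series expansion, extended to the enlarged list of non-commutative indeterminates $\{z_1,\ldots,z_n,h\}$. First I would use the additivity and right-$\mathbb{H}$-linearity of $\widehat{\partial}_{z_j}$ to reduce to a fixed degree: writing $f=\sum_d f_d$ with each $f_d$ normalized so that its unitary coefficients $a_0,\ldots,a_{d-1}$ lie in $\mathbb{R}P^3$, it suffices to show that $\widehat{\partial}_{z_j}f_d\equiv 0$ forces every monomial of $f_d$ containing $z_j$ to have zero coefficient.

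Next I would unpack the combinatorial action of $\widehat{\partial}_{z_j}$. For a single monomial $M=a_0 z_{i_1}a_1\cdots a_{d-1}z_{i_d}a_d$, the differential $\widehat{\partial}_{z_j}M[h]$ is the sum, over those positions $\ell$ with $i_\ell=j$, of the monomial obtained from $M$ by replacing the $\ell$-th variable by $h$, which is exactly the content of the sample calculation preceding Lemma~\ref{lem1}. Regarded as monomials in the $n+1$ indeterminates $z_1,\ldots,z_n,h$, each such summand uniquely recovers both the original $M$ (via the substitution $h\mapsto z_j$) and the differentiated position (as the location of the unique occurrence of $h$). Hence distinct monomials of $f_d$ produce disjoint families of enlarged monomials, and no cancellation between contributions coming from different $M$'s is possible.

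The conclusion then follows by applying the formal uniqueness assumption in the enlarged ring $\mathcal{H}[z_1,\ldots,z_n,h]$: from $\widehat{\partial}_{z_j}f_d\equiv 0$ every coefficient of an enlarged monomial linear in $h$ must vanish, and by the injectivity noted above this is equivalent to the vanishing of the coefficient of every monomial of $f_d$ that contains $z_j$. Summing over $d$ shows that $f$ involves no monomial containing $z_j$, i.e.\ $f$ is formally a power series in $z_1,\ldots,z_{j-1},z_{j+1},\ldots,z_n$, as claimed.

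The main obstacle is making the no-cancellation step rigorous in this non-commutative, graded setting: one must be certain that inserting $h$ in place of a single $z_j$ in a normalized monomial produces another uniquely decodable monomial in the enlarged alphabet, with no accidental identifications coming from two different insertions in different monomials. This hinges precisely on the $\mathbb{R}P^3$-normalization of the coefficients $a_0,\ldots,a_{d-1}$, together with the natural extension of the formal uniqueness hypothesis from $n$ to $n+1$ non-commutative variables; once these are in place, the rest of the argument is essentially a coefficient comparison.
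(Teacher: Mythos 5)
Your argument is correct: regarding $h$ as an $(n+1)$-st non-commutative indeterminate, observing that the assignment $(\text{monomial},\ \text{position of }z_j)\mapsto(\text{monomial with }h\text{ inserted at that position})$ is injective on normalized monomials (the enlarged word recovers the position as the unique occurrence of $h$ and recovers the original monomial via $h\mapsto z_j$), and then invoking formal uniqueness of the expansion in $\mathcal{H}[z_1,\ldots,z_n,h]$ forces every coefficient of a monomial of $f$ containing $z_j$ to vanish. The paper itself supplies no proof --- it declares the proof ``somehow redundant'' and omits it --- so there is nothing to compare against, but your coefficient-comparison argument is precisely the routine verification the authors leave implicit, and your closing remark correctly identifies the only point needing care, namely that the $\mathbb{R}P^3$-normalization and the formal-uniqueness hypothesis must be carried over to the enlarged alphabet.
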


\begin{remark}
{\em One can also define the
(differential) operator
\begin{equation}\label{df2}
\widetilde{\partial}_{z_j} f(z_1,\ldots, z_n) :=\widehat{\partial}_{z_j} f(z_1,\ldots, z_n)[1],
\end{equation}
which coincides with the corresponding (Cullen) derivative, when $f$ is a
slice-regular functon.
In short, the operator
$\widetilde{\partial}_{z_j}$
replaces each  $z_j$  with $1$.

However, a result like the one in Lemma \ref{lem1} doesn't hold when considering
$\widetilde{\partial}$ instead of $\widehat{\partial}.$ Indeed,
\[ \widetilde{\partial}_{z_1}(z_1z_2-z_2z_1)=0  \]
but the function $f(z_1,z_2)=z_1z_2-z_2z_1$ does not depend on $z_2$ only.}
\end{remark}

\subsection{Derivatives of mappings}

Consider a mapping $F = (f_1, f_2),\, f_1,f_2 \in {\mathcal H}[z,w]$  and  define
$$
  DF(z,w)[h_1,h_2]:= \begin{bmatrix}
   \whp_z f_1(z,w)[h_1] & \whp_w f_1(z,w)[h_2]  \\
   \whp_z f_2(z,w)[h_1] & \whp_w f_2(z,w)[h_2]
          \end{bmatrix}
$$
  Let $G=(g_1,g_2),\, g_1, g_2 \in {\mathcal H}[z,w],$ and write $(u,v) = G(z,w).$
 If

$$
  DG(z,w)[h_1,h_2] = \begin{bmatrix}
   \whp_z g_1(z,w)[h_1] & \whp_w g_1(z,w)[h_2]  \\
   \whp_z g_2(z,w)[h_1] & \whp_w g_2(z,w)[h_2]
   \end{bmatrix} =
   \begin{bmatrix}
     a_1 & b_1\\
     a_2 & b_2
   \end{bmatrix}
$$
then we define the derivative of the composition as

\[
D(F \circ G)(z,w)[h_1,h_2] =\begin{bmatrix}
   \whp_z f_1(u,v)[a_1] + \whp_w f_1(u,v)[a_2]  & \whp_z f_1(u,v)[b_1] + \whp_w f_1(u,v)[b_2] \\
   \whp_z f_2(u,v)[a_1] + \whp_w f_2(u,v)[a_2]  &\whp_z f_2(u,v)[b_1] + \whp_w f_2(u,v)[b_2]
  \end{bmatrix}.\]
We introduce a  new notation and write
\begin{equation}\label{diamond}
 D(F \circ G)(z,w)[h_1,h_2] =
\begin{bmatrix}
   \whp_z f_1(u,v) & \whp_w f_1(u,v)  \\
   \whp_z f_2(u,v) & \whp_w f_2(u,v)
  \end{bmatrix}
  \diamond
    \begin{bmatrix}
     a_1 & b_1\\
     a_2 & b_2
   \end{bmatrix}
\end{equation}
so that
\[ D(F \circ G)(z,w)[h_1,h_2]  =  DF(G(z,w))\diamond DG(z,w)[h_1,h_2].
\]

\subsection{Bidegree full functions (in two variables)}

Even though many of the following considerations can be given
in a general formulation for $f\in \mathcal{H}[z_1,\ldots, z_n]$,
for the sake of clearness and to avoid complicated notations,
we'll focus our attention to the two variable case and denote $z_1=z$,   $z_2=w.$

In each $\mathbb{H}_d[z,w],$
we consider the submodule $\mathbb{H}^1_d[z,w]$ whose elements are finite linear combinations of
monomials 
of {\em bidegree} $(p, q), p,q \geq 0, p + q = d$ with respect to the
variables $z$ and $w$; they are all the monomials of total
degree $d$ formed considering $p$ copies of $z$'s and $q$ copies of
$w$'s.
There are $\binom{p+q}{p}$ such monomials and each of them can
be represented by a string (called a {\em word})  $\alpha^{p,q}
=(\alpha_1^{p,q},\ldots,\alpha^{p,q}_{d})\in \{0,1\}^d$ such that
$|\alpha^{p,q}| := \sum\limits_{l=1}^{d}
\alpha_l^{p,q} = p.$ With this notation we can write
\[(z,w)^{\alpha^{p,q}} :=
(z^{\alpha^{p,q}_1}w^{1-\alpha^{p,q}_1}) \cdot \ldots \cdot
(z^{\alpha^{p,q}_{d}}w^{1-\alpha^{p,q}_{d}}).\]

\noindent Notice that, if
$f(z,w) = \sum\limits_d f_d(z,w)\in\mathcal{H}^1[z,w]$,  then

\[f_d(z,w)=\sum_{\alpha^{p,q}} (z,w)^{\alpha^{p,q}} a_{\alpha^{p,q}}\]
with $p+q=d$.

\noindent Denote by

\[S_{p,q}(z,w):=\sum\limits_
{\substack {\alpha^{p,q},\\ |\alpha^{p,q}| = p\\ p+q=d}
}(z,w)^{\alpha^{p,q} }.\] It is clear that $S_{p,q}(z,w)=
S_{q,p}(w,z).$ 
If $z$ and $w$ commute, then
$S_{p,q}(z,w) = \binom{p+q}{p} z^pw^q$. We also have this important
identity
\begin{equation}\label{e4}
\widehat{\partial}_z S_{p+1,q}(z,w)[h] = \widehat{\partial}_w S_{p,q+1}(z,w)[h].
\end{equation}

Proving that monomials of bidegree $(p,q)$ are not just formally
(right) linearly independent, but (right) linearly independent as
functions, is a nontrivial problem. However, we can prove this fact for some
cases.
\begin{proposition}\label{linearindip}
Consider a polynomial of bidegree $(p,q)$ with $p+q=d$ and either $q \leq 1$ or $p \leq 1,$
 \[P_{p,q} (z,w) =
\sum\limits_
{\substack
{\alpha^{p,q},\\
|\alpha^{p,q}| =  p} }(z,w)^{\alpha^{p,q} }a_{\alpha^{p,q} };\].
If $P_{p,q} (z,w)\equiv 0$ then necessarily
$a_{\alpha^{p,q} }=0$ for any $\alpha^{p,q}$.
\end{proposition}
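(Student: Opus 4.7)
The plan is to exploit the symmetry $S_{p,q}(z,w)=S_{q,p}(w,z)$ to reduce to the case $q\leq 1$ (swap $z\leftrightarrow w$), and then handle $q=0$ and $q=1$ separately. The case $q=0$ is immediate: $P_{p,0}(z,w)=z^{p}a_{0}$, so evaluating at $z=1$ forces $a_{0}=0$. The substance of the proof is therefore the case $q=1$, where
\[
P_{p,1}(z,w)=\sum_{k=0}^{p} z^{k}\,w\,z^{p-k}\,a_{k}
\]
has $p+1$ unknown coefficients $a_{0},\ldots,a_{p}\in\mathbb{H}$ and we are told $P_{p,1}\equiv 0$.

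The key idea is to choose test values of $z$ and $w$ that collapse the non-commutativity of the problem into a single complex slice, and then invoke Fourier-style linear independence. Fix an imaginary unit $I\in\mathbb{S}$ and a second imaginary unit $J\in\mathbb{S}$ with $IJ=-JI$. For any $z\in\mathbb{C}_{I}:=\mathbb{R}+\mathbb{R}I$ one has $Jz=\bar z J$, so $Jz^{p-k}=\bar z^{p-k}J$ and hence $z^{k}Jz^{p-k}=z^{k}\bar z^{p-k}J$. Setting $w=J$ therefore turns the hypothesis into
\[
\sum_{k=0}^{p} z^{k}\bar z^{p-k}\,(Ja_{k})=0\qquad\text{for every }z\in\mathbb{C}_{I}.
\]

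Next I would use the splitting $\mathbb{H}=\mathbb{C}_{I}\oplus\mathbb{C}_{I}J$ to write $Ja_{k}=c_{k}+d_{k}J$ with $c_{k},d_{k}\in\mathbb{C}_{I}$. Since $z^{k}\bar z^{p-k}\in\mathbb{C}_{I}$, the two summands land in complementary real $8$-dimensional parts of $\mathbb{H}$, giving the independent identities $\sum_{k}z^{k}\bar z^{p-k}c_{k}=0$ and $\sum_{k}z^{k}\bar z^{p-k}d_{k}=0$ on $\mathbb{C}_{I}$. Writing $z=re^{I\theta}$, one has $z^{k}\bar z^{p-k}=r^{p}e^{I(2k-p)\theta}$; the exponents $2k-p$ for $k=0,\ldots,p$ are pairwise distinct, so by uniqueness of trigonometric polynomial expansions all $c_{k}$ and $d_{k}$ vanish. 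Thus $Ja_{k}=0$, and since $J$ is invertible, $a_{k}=0$ for every $k$.

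I do not anticipate a real obstacle, as the argument is essentially a change-of-variable trick followed by a standard independence statement in one complex variable; the only subtlety is recognising that the non-commutativity must be used to one's advantage (conjugation by $J$ inside $\mathbb{C}_{I}$) rather than fought. The bound $q\leq 1$ is used precisely because with a single $w$-factor the anti-commutation can be applied globally; for $q\geq 2$ one obtains $z^{k_{1}}\bar z^{m_{1}}z^{k_{2}}\bar z^{m_{2}}\cdots$ patterns where the $z$ and $\bar z$ factors no longer collapse into a single monomial in $(z,\bar z)$, which is why the general bidegree case is listed as nontrivial.
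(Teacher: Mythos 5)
Your argument is correct and essentially the paper's own proof: both substitute $w=J$ with $J$ anticommuting with $I$, use $Jz=\bar z J$ to collapse each monomial $z^k w z^{p-k}$ into $z^k\bar z^{p-k}$ times a constant, split the coefficients along $\mathbb{H}=\mathbb{C}_I\oplus\mathbb{C}_I J$, and finish by the linear independence of the functions $z^k\bar z^{p-k}$ on $\mathbb{C}_I$ (the paper invokes the identity principle for complex polynomials in $z,\bar z$ where you use the equivalent Fourier expansion $z^k\bar z^{p-k}=r^p e^{I(2k-p)\theta}$). The only slip is cosmetic: $\mathbb{C}_I$ and $\mathbb{C}_I J$ are $2$-dimensional real subspaces of the $4$-dimensional algebra $\mathbb{H}$, not $8$-dimensional ones.
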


\begin{proof}
The cases $p=0$ or $q=0$ are trivial.
If $q=1$ then we can use a simpler notation and write

 \[P_{p,1} (z,w) = \sum\limits_ {n=0}^d z^nwz^{d-n}a_n.\]
 If  $P_{p,1} (z,w)\equiv 0$, then in particular
 $P_{p,1} (z,w) = 0$ for $z=x+Iy$  and $w=J\in\mathbb{S}$ an imaginary unit orthogonal to $I$ such that
 $\{I,J,IJ\}$ is an orthonormal basis of $\mathbb{R}^3$.
 In particular, this choice of $J$ implies that  $zw=w\bar z$.
 Hence
 \[0=P_{p,1} (z,w) = w\sum\limits_ {n=0}^d \bar{z}^nz^{d-n}a_n;\]
since $w=J\neq 0$, it follows that

\[\sum\limits_ {n=0}^d \bar{z}^nz^{d-n}a_n\equiv 0\]
for any choice of $x,y\in\mathbb{R}$ or  $z\in\mathbb{C}_{I}:=\{z=x+Iy\ |\ x,y\in\mathbb{R} \}\simeq\mathbb{C} $.
Since for any $n$ it turns out that $a_n=u_n+v_nJ$ with $u_n,v_n\in\mathbb{C}_I$, then
$\sum\limits_ {n=0}^d \bar{z}^nz^{d-n}a_n=0$
splits into two independent conditions (on $\mathbb{C}_I$), namely
$\sum\limits_ {n=0}^d \bar{z}^nz^{d-n}u_n=0$ and
$\sum\limits_ {n=0}^d \bar{z}^nz^{d-n}v_n=0;$
from the Identity Principle for complex polynomials, we conclude, that $u_n=0$ and $v_n=0$ for any $n$ and so
$a_n=0$ for $n=0,\ldots , d$.
\end{proof}

\begin{definition}

We define
\[ \mathbb{H}^{BF}_d[z,w]:=
\left\{ \sum_{p+q=d}
  S_{p,q}(z,w)a_{p,q},\: a_{p,q} \in \mathbb{H}\right\}\]
and
\[ \mathbb{H}^{BF}[z,w]
:=\bigoplus\limits_d  \mathbb{H}^{BF}_d[z, w].\]


We say that $\mathbb{H}^{BF}[z,w]$ is the right module of
 {\em bidegree  full} (in short BF)   polynomials in the variables $z,w$.
Similarly, we define the right module of {\em  bidegree full functions} to consist of
converging  power series of the form
\[
  f(z,w) = 
\sum_{d = 0}^{\infty} f_d(z,w),
\]
with $f_d(z,w)\in  \mathbb{H}^{BF}_d[z,w]$ and denote it by $\mathcal{H}^{BF}[z,w]$.
\end{definition}

The following result shows that bidegree full polynomials form an
interesting class of polynomials.
\begin{lemma}\label{l1}

For any real number $\mu$ and any $d\in\mathbb{N}$, the polynomial
$(z-\mu w)^d:= \overbrace{(z - \mu w)\cdots (z - \mu w)}^{d\ \mathrm{ times}}$
 is
bidegree full.
If $P(z,w)= \sum\limits_{d = 0}^l \sum\limits_{\substack{p,q \geq 0,\\
p+q = d}}
  S_{p,q}(z,w)a_{p,q}$ is a bidegree full polynomial
of degree $d$, then it also has a decomposition
\begin{equation}\label{decomp}
  P(z,w) = \sum\limits_{d = 0}^l \sum\limits_{ p+q=d}
  \left(\sum_{n = 0}^d(z - n w)^d r_{p,d}(n)\right)a_{p,q}, \; {with}\ r_{p,d}(n)\in \mathbb{R}.
\end{equation}
\end{lemma}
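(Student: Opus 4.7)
The plan is to prove the two assertions in sequence, both reducing to essentially formal computations once the right setup is in place.

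For the first claim, I would expand $(z-\mu w)^d$ using distributivity and the fact that $\mu \in \mathbb{R}$ is central in $\mathbb{H}$ and therefore commutes with $z$ and $w$ in every position. Each term in the expanded product of $d$ copies of $(z-\mu w)$ is a word in the symbols $z$ and $w$ of some bidegree $(p,q)$ with $p+q=d$, multiplied by the scalar $(-\mu)^q$ (one factor of $-\mu$ for each $w$). Collecting all such words of a given bidegree gives
\[
(z-\mu w)^d \;=\; \sum_{p+q=d} S_{p,q}(z,w)\,(-\mu)^q,
\]
which shows that $(z-\mu w)^d \in \mathbb{H}^{BF}_d[z,w]$ with coefficients $a_{p,q}=(-\mu)^q$.

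For the second claim, I would work one homogeneous component at a time: it suffices to prove that for each fixed $d$ and each $p\in\{0,1,\ldots,d\}$, there exist real constants $r_{p,d}(0),\ldots,r_{p,d}(d)$ with
\[
S_{p,d-p}(z,w) \;=\; \sum_{n=0}^{d} (z-nw)^d\,r_{p,d}(n),
\]
because multiplying by $a_{p,q}$ on the right and summing over $p+q=d$ and over $d\le l$ then gives the claimed decomposition. Substituting the expansion from the first claim (with $\mu=n$) and matching coefficients in the basis $\{S_{d-q,q}\}_{q=0}^{d}$ of $\mathbb{H}^{BF}_d[z,w]$ turns this into the linear system
\[
\sum_{n=0}^{d} (-n)^q\,r_{p,d}(n) \;=\; \delta_{q,\,d-p}, \qquad q=0,1,\ldots,d.
\]
The coefficient matrix has entries $M_{q,n}=(-n)^q=(-1)^q n^q$; factoring $(-1)^q$ out of the $q$-th row reduces it to the classical Vandermonde matrix at the nodes $0,1,\ldots,d$, which is invertible over $\mathbb{R}$ because these nodes are distinct.

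The only conceptual hurdle is the temptation to worry that non\-commutativity of $z,w$ ruins the Vandermonde argument; the resolution is that $\mu$ lies in $\mathbb{R}\subset\mathbb{H}$ and that $\mathbb{H}^{BF}_d[z,w]$ admits $\{S_{p,q}\}_{p+q=d}$ as a right $\mathbb{H}$-module basis by definition, so matching coefficients is legitimate. Given that, the proof is reduced to invertibility of a Vandermonde matrix, a routine fact; the constants $r_{p,d}(n)$ produced are automatically real, which is exactly what the statement demands.
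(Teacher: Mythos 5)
Your proof is correct and follows essentially the same route as the paper: the identity $(z-\mu w)^d=\sum_{p+q=d}S_{p,q}(z,w)(-\mu)^q$ is exactly the paper's first step, and the second claim in both cases reduces to the fact that $z^d,(z-w)^d,\ldots,(z-dw)^d$ form a basis of $\mathbb{H}^{BF}_d[z,w]$. The only difference is that the paper cites Andersen's inductive proof that $\{x^d,(x-1)^d,\ldots,(x-d)^d\}$ is a basis of real polynomials of degree at most $d$, whereas you verify the same fact directly by inverting the Vandermonde matrix at the nodes $0,1,\ldots,d$, which makes the argument self-contained.
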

\begin{proof}

Indeed, from direct calculations, it follows that
\[
 (z-\mu w)^d =(z-\mu w)\cdot \ldots \cdot (z - \mu w) = \sum_{\substack{p,q \geq 0,\ \\ p + q = d}}  S_{p,q}(z,w)(-\mu)^{q}.
\]
The second statement follows from the fact
(proved in  \cite{A} by induction on $d$ with an argument which
applies to our setting) that the polynomials
$\{x^d,(x-1)^d,\ldots, (x-d)^d\}$
form a basis of real polynomials of order less or equal to $d$
and consequently polynomials
$z^d,(z-w)^d,\ldots, (z-dw)^d$  form a basis of $ \mathbb{H}^{BF}_d[z,w]$


\end{proof}

 Notice, furthermore,  that
\begin{equation}\label{e3}
\widehat{\partial}_w (z - \mu w)^d =-\mu \widehat{\partial}_z (z - \mu w)^d
\end{equation}
if and only if $\mu \in \Rr$.


\begin{remark} {\em
As a consequence of Lemma \ref{l1}, from any convergent quaternionic power series
in the variable $u$ of the form
\[u\mapsto \sum_d u^d a_d\]
(which actually is a slice--regular function of $u$)
one gets a bidegree full function by replacing $u$ with $z-\mu w$, namely
\[f(z,w)=\sum_d (z- \mu w)^d a_d \in\mathcal{H}^{BF}[z,w];\]
this function is not a slice--regular
function in the variables $z$ and $w$.}

\end{remark}

\subsection{Generalizations of bidegree full functions}

The generators $z^d,(z-w)^d,\ldots, (z-dw)^d$ of $
\mathbb{H}^{BF}_d[z,w]$ were obtained by precomposing the monomial
$u^d$ by functions $u = z - nw$ for $n = 0,\ldots, d.$

Similarly, given $a =(a_1,\ldots, a_d),$ one can consider the monomial
of degree $d$ in variable $u$ of the form
$$
   a_0 u a_1 u \ldots a_{d} u.
$$ Precomposing it by functions $u = z - nw$ for $n = 0,\ldots, d,$
   one obtains generators of the right module of generalized BF
   polynomials of degree $d$ denoted by $\mathbb{H}^{BF,a}_d[z,w].$

Another possible generalization is to consider the precompositions of
the slice--regular functions $f(u) = \sum_d u^d a_d$ by $u = z - \mu w$
as in Lemma \ref{l1} with $\mu \in \mathbb{H},$

\[ f(z - \mu w) = \sum_d (z-\mu w)^na_n, \quad a_n\in\mathbb{H}.\]
These functions have the geometric property of leaving invariant
quaternionic parallel affine subsets along the direction $(\mu,1)$ as
explained in the next

\begin{definition}
  Given $\mu\in\mathbb{H}$, we say that a quaternionic function $f$ of the variables $z,w$ is
$(\mu,1)$--right-invariant if
\[f(z,w)=f(z+\mu s, w+s)=f((z,w)+(\mu,1)s)=f(z -\mu w)\]
for any $z,w$ and any $s\in\mathbb{H}$.
\end{definition}



\section{Quaternionic Vector Fields in two variables}

In this section, using the definition of $\hat{\partial}$, we develop some analytic tools such as divergence,
rotor, and flow for quaternionic vector fields in two variables.  We
show that there is a large class of vector fields with good
analyticity properties.

\begin{definition}
Given $f,g\in \mathcal{H}[z,w]$,  the mapping
$X(z,w)=(f(z,w), g(z,w))$ is called a
 {\em  vector field} in $\mathbb{H}^2,$
 in short we write $X \in \mathcal{VH}.$ The subset of vector fields $X = (f,g)$  with $f,g \in \mathcal{H}^1[z,w]$ is denoted by
$\mathcal{VH}^1$.
In particular, we say, that a vector field
$X =(f,g)$ is {\em bidegree full} (in
short BF) if the functions $f,g$ are bidegree full functions and use the notation
$X \in \mathcal{VH}^{BF}$.
We assume from now on that the vector fields and functions are all defined on $\mathbb{H}^2$.

\end{definition}
Next we introduce the following

\begin{definition}
Given the vector field $X(z,w)=(f(z,w), g(z,w))$,
we define the differential) operator
\[
  \mathrm{Div}X(z,w)[h] := \widehat{\partial}_zf(z,w)[h] + \widehat{\partial}_wg(z,w)[h]
\]
and we say that the vector field $X$ has  {\em divergence} if
$ \mathrm{Div}X(z,w)[h]$ is left $h$ linear, i.e. if
there exists a function -- which will be  denoted by $\mathrm{div}X(z,w)$ -- such that

\[
\mathrm{Div}X(z,w)[h] = h\, \mathrm{div} X(z,w).
\]

\end{definition}

\example The vector field $(zw + wz, -w^2)$ has divergence zero,
\[
  \mathrm{Div}(z w + w z, -w^2)[h] = h  w + w h  - (h  w + w h)= 0,
\]
while the vector field $( z^2 w, -z w^2)$ does not have  divergence, since the operator
\[
  \mathrm{Div}(z^2 w, -z w^2)[h] = (h z  + zh) w   - z (h  w + w h) = hzw-zwh
\]
is not left linear in $h.$

\medskip
One of the main reasons for the introduction of the
operators $\widehat{\partial}_z, \widehat{\partial}_w$ and  $\mathrm{Div}$ is the following

\begin{theorem}\label{crcthm} Let $X(z,w) = (f(z,w),g(z,w)) \in \mathcal{VH}^1$ be a vector field with  divergence. Then
$\mathrm{div} X(z,w)$ is BF. If $\mathrm{div} X(z,w) = 0$ then $X$ is BF.
\end{theorem}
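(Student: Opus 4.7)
The plan is to work bidegree by bidegree, relying on the uniqueness of the power series expansion in $\mathcal{H}^{1}[z,w]$ (and in $\mathcal{H}^{1}[z,w,h]$ once $h$ is regarded as a formal quaternionic variable). Writing the bidegree-$(p+1,q)$ part of $f$ and the bidegree-$(p,q+1)$ part of $g$ as
\[
f_{p+1,q}=\sum_{|\alpha|=p+1}(z,w)^{\alpha}A_{\alpha},\qquad
g_{p,q+1}=\sum_{|\beta|=p}(z,w)^{\beta}B_{\beta},
\]
with $\alpha,\beta\in\{0,1\}^{p+q+1}$, I would first expand $\mathrm{Div}\,X[h]$ on the bidegree-$(p,q)$ piece. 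Each monomial that appears has $p$ copies of $z$, $q$ copies of $w$, and exactly one $h$ sitting in some slot $i\in\{1,\dots,p+q+1\}$. For fixed $i$ and a $(z,w)$-pattern $\sigma$ completing the word off position $i$, this monomial appears with coefficient $A_{\alpha}+B_{\beta}$, where $\alpha$ is $\sigma$ with $\alpha_{i}=1$ and $\beta$ is $\sigma$ with $\beta_{i}=0$.

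The hypothesis that $X$ has divergence, namely $\mathrm{Div}\,X[h]=h\cdot\mathrm{div}\,X(z,w)$, translates via formal uniqueness into the requirement that every coefficient with $h$ in slot $i>1$ vanishes, so $A_{\alpha}=-B_{\beta}$ for each such interior pair. Because $\alpha$ and $\beta$ necessarily share the same first letter, these constraints decouple into two independent subsystems, one for words starting with $1$ and one for words starting with $0$. The heart of the argument is then a combinatorial claim: within each subsystem all the $A_{\alpha}$'s share one common value, and so do all the $B_{\beta}$'s. Given $\alpha,\alpha'$ with the same first letter that differ only by swapping a $1$ and a $0$ at positions $j,k>1$, I would take $\beta$ to be $\alpha$ with $\alpha_{j}$ flipped to $0$ and observe that this coincides with $\alpha'$ with $\alpha'_{k}$ flipped to $0$, whence the chain $A_{\alpha}=-B_{\beta}=A_{\alpha'}$; since the symmetric group on positions $\{2,\dots,p+q+1\}$ acts transitively on $\{0,1\}$-words of fixed weight, this identifies everything on each orbit. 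The corner cases $p=0$ or $q=0$ collapse one of the orbits to a singleton (or empty set) and are handled trivially. Consequently there exist constants $a^{\pm}_{p+1,q},\,b^{\pm}_{p,q+1}\in\mathbb{H}$ for which
\[
f_{p+1,q}=a^{+}_{p+1,q}\,z\,S_{p,q}(z,w)+a^{-}_{p+1,q}\,w\,S_{p+1,q-1}(z,w),
\]
\[
g_{p,q+1}=b^{+}_{p,q+1}\,z\,S_{p-1,q+1}(z,w)+b^{-}_{p,q+1}\,w\,S_{p,q}(z,w),
\]
together with the relations $a^{+}_{p+1,q}=-b^{+}_{p,q+1}$ (when $p\geq 1$) and $a^{-}_{p+1,q}=-b^{-}_{p,q+1}$ (when $q\geq 1$).

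To conclude, I would collect the $i=1$ contributions to $\mathrm{Div}\,X[h]$ and read off the bidegree-$(p,q)$ piece of $\mathrm{div}\,X$ as $(a^{+}_{p+1,q}+b^{-}_{p,q+1})\,S_{p,q}(z,w)$, which is manifestly BF; summing over $(p,q)$ proves the first assertion. For the second assertion, assuming $\mathrm{div}\,X=0$ yields $a^{+}_{p+1,q}+b^{-}_{p,q+1}=0$ in every bidegree, and combining this with the two interior-$h$ relations forces $a^{+}_{p+1,q}=a^{-}_{p+1,q}$ and $b^{+}_{p,q+1}=b^{-}_{p,q+1}$. The Pascal-type identity $S_{p+1,q}(z,w)=z\,S_{p,q}(z,w)+w\,S_{p+1,q-1}(z,w)$ then collapses the displays to $f_{p+1,q}=a^{+}_{p+1,q}S_{p+1,q}(z,w)$ and $g_{p,q+1}=b^{-}_{p,q+1}S_{p,q+1}(z,w)$, so both components, and hence $f$ and $g$, are BF. The main obstacle I expect is the combinatorial transitivity step that identifies all the $A_{\alpha}$'s within each subsystem via two-position swaps at positions $>1$; once this is in hand, the remaining arithmetic is routine bookkeeping plus the Pascal identity.
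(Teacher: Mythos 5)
Your proposal is correct and follows essentially the same route as the paper's proof: decompose by bidegree, use formal independence of the monomials in $(z,w,h)$ to force cancellation of every term with $h$ away from the first slot, run the transposition/connectivity argument to equalize all coefficients within each first-letter class, and finish with the identity $S_{p+1,q}(z,w)=zS_{p,q}(z,w)+wS_{p+1,q-1}(z,w)$; the paper merely works out explicitly the degenerate cases $p=0$, $q=0$, $q=1$ that you fold into the ``singleton orbit'' remark. One small slip to fix: since $\mathcal{H}^1[z,w]$ is a right $\mathbb{H}$-module and your $A_\alpha$, $B_\beta$ sit on the right, the common constants must also be factored out on the right, e.g.\ $f_{p+1,q}=zS_{p,q}(z,w)\,a^{+}_{p+1,q}+wS_{p+1,q-1}(z,w)\,a^{-}_{p+1,q}$ and $\mathrm{div}\,X=S_{p,q}(z,w)\bigl(a^{+}_{p+1,q}+b^{-}_{p,q+1}\bigr)$, not on the left as in your displayed formulas (quaternionic constants do not commute with $z$ and $w$, and BF polynomials are defined with right coefficients).
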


\begin{proof}
 To simplify the notation
 write $\mathrm{div}X(z,w) = \Delta(z,w).$
Let $f(z,w) = \sum f_{p,q}(z,w),$ $ g(z,w) = \sum g_{p,q}(z,w)$ and $\Delta(z,w) = \sum \Delta_{p,q}(z,w)$ be
the decompositions of $f,$  $g$ and $\Delta$  with respect to the bidegrees.  Then
$\Div X(z,w)[h]= h  \Delta(z,w),$ iff
\begin{equation}\label{e5}
   \widehat{\partial}_z f_{p+1,q}(z,w)[h] +   \widehat{\partial}_w g_{p,q+1}(z,w)[h] = h \Delta_{p,q}(z,w)
\end{equation}
for $p,q \geq 0.$ We have two more equations, which always hold, namely,
$$
  \whp_z f_{0,q}(z,w)[h] = 0\mbox{  and  }\whp_w g_{p,0}(z,w)[h] = 0.
$$
Write
\begin{eqnarray*}
  f_{p+1,q}(z,w) &=& z\sum\limits_{\substack{\alpha_1 \in \lbrace0,1\rbrace^{p+q},\\|\alpha_1| = p}}  (z,w)^{\alpha_1} A_{\alpha_1}
             + w\sum\limits_{\substack{\alpha_2 \in \lbrace0,1\rbrace^{p+q},\\|\alpha_2| = p+1}}  (z,w)^{\alpha_2} A_{\alpha_2},\\
  g_{p,q+1}(z,w) &=& z\sum\limits_{\substack{\beta_1 \in \lbrace0,1\rbrace^{p+q},\\|\beta_1| = p-1}}(z,w)^{\beta_1}B_{\beta_1} +
                w\sum\limits_{\substack{\beta_2 \in \lbrace0,1\rbrace^{p+q},\\|\beta_2| = p}}(z,w)^{\beta_2}B_{\beta_2}.\\
\end{eqnarray*}
Since divergence is left linear in $h$, all the terms in the
derivative coming from the second sum for $f_{p+1,q}$ (similarly for
the first sum for $g_{p,q+1}$) should cancel out. Since the terms in
the expression $\widehat{\partial}_zf_{p+1,q}(z,w)[h]$ are formally linearly
independent, the only possibility is, that such a term is cancelled out
by a term in $\widehat{\partial}_zg_{p,q+1}(z,w)[h].$ Consider a
monomial from the second sum whose associated word is
of the form $0\alpha_2 = 0 \alpha_2^1 1
\alpha_2^2 0 \alpha_2^3.$ Then
$\widehat{\partial_z}(z,w)^{0\alpha_2}[h]$ has a monomial of the form
$w \cdot(z,w)^{\alpha_2^1} \cdot h \cdot (z,w)^{\alpha_2^2} \cdot w \cdot (z,w)^{\alpha_2^3},$ so
it can be cancelled out only by a term in
$\widehat{\partial}_w(z,w)^{\beta}[h]$ for $\beta = 0 \alpha_2^1 0
\alpha_2^2 0 \alpha_2^3.$ Since there is another zero, the above
derivative contains also a term $w \cdot (z,w)^{\alpha_2^1}\cdot w \cdot
(z,w)^{\alpha_2^2}\cdot h \cdot (z,w)^{\alpha_2^3},$ and this one can be
cancelled only by a term from $\widehat{\partial_z}(z,w)^{\alpha}[h]$
for $\alpha = 0 \alpha_2^1 0 \alpha_2^2 1 \alpha_2^3 = 0
\tilde{\alpha}_2.$ The sequences $\alpha_2$ and $\tilde{\alpha}_2$
differ only by a transposition. So, if both $\alpha_2$ and
$\tilde{\alpha}_2$ with $|\alpha| = |\alpha_2| = p+1$ contain at
least one $1$ (which is the case) and one $0$, they differ by a
sequence of transpositions and therefore $A_{\alpha_2} =
A_{\tilde{\alpha}_2}.$ So, there exist $A$ such that
 \[ A = A_{\alpha_2} = A_{\tilde{\alpha}_2} = -B_{\beta_2}, \quad \forall \alpha_2, \beta_2,
 \]
 provided $q \geq 2$ (and $p+1 \geq 1$).
 Analogously, there  exist $B$ such that
\[ B = - B_{\beta_1} = A_{1 \alpha_1}, \quad \forall \alpha_1, \beta_1
 \]
 if $q \geq 1$ and $p \geq 1.$
 Then
 \begin{eqnarray*}
   (f_{p+1,q}(z,w), g_{p, q+1}(z,w))& =&\\
   &&\hspace{-4cm}= (z S_{p,q}(z,w) B + w S_{p+1,q-1}(z,w) A, - z S_{p-1, q+1}(z,w) B - w S_{p,q}(z,w) A)\\
   &&\hspace{-4cm} =z(S_{p,q}(z,w),-  S_{p-1, q+1}(z,w))B + w(  S_{p+1,q-1}(z,w), -  S_{p,q}(z,w)) A,
 \end{eqnarray*}
 and
 \begin{eqnarray*}
   h \Delta_{p,q}(z,w) &=& h S_{p,q}(z,w) B + z \whp_z S_{p,q}(z,w) [h]B + w \whp_z S_{p+1,q-1}(z,w) [h]A - \\
       &&\quad - z \whp_w S_{p-1,q+1}(z,w) [h] B - h S_{p,q}(z,w) [h] A - w \whp_z S_{p,q}(z,w) [h]A =\\
       &=& h S_{p,q}(z,w) (B - A),
 \end{eqnarray*}
 since by (\ref{e4}) we have
 \[
    \whp_z S_{p,q}(z,w) [h] =  \whp_w S_{p-1,q+1}(z,w)[h], \;  \whp_z S_{p+1,q-1}(z,w) [h] =  \whp_w S_{p,q}(z,w)[h], \;
 \]
 thus $\Delta_{p,q}$ is BF and $\div (S_{p,q}(z,w),-  S_{p-1, q+1}(z,w)) = 0$ for all $p \geq 1, q\geq 0.$
 If divergence is $0,$ then also $A = B$ and
 \[
    (f_{p+1,q}(z,w), g_{p, q+1}(z,w)) = (S_{p+1,q}(z,w), -  S_{p,q+1}(z,w)) A.
 \]
 We have three remaining cases to check separately, $p =0, q=0$ and $q=1.$  In the first case,
 we have a degree $q+1$ vector field $X(z,w) = (f_{1,q}(z,w), g_{0,q+1}(z,w))$,
 $$ f_{1,q} (z,w)=z w^{q} A_{q} + w \sum_{\alpha \in
   \lbrace0,1\rbrace^{q-1},|\alpha| = 1} (z,w)^{\alpha}
 A_{\alpha},\, g_{0,q+1} = w^{q+1} B.\\
 $$ Since there is only one element in the second component, it
   follows that $B = -A_{\alpha}$ for all $\alpha$ and so the vector
   field is of the form
$$
  (z w^q A_q - w S_{1,q -1}(z,w)B,  w^{q+1} B) = (z w^q ,0) A_q + (-w S_{1,q-1}(z,w), S_{0,q+1}) B.
$$ with divergence equal to $w^q (A_q + B).$  Again, if divergence is $0,$
   then $A_q = -B$ and the vector field is of the form
\[
  (z w^q A_q - w S_{1,q -1}(z,w)B,  w^{q+1} B) = (- S_{1,q}(z,w), S_{0,q+1}) B.
\]
The second is the case of vector
   fields of the form $X(z,w) = (f_{p+1,0}(z,w), g_{p,1}(z,w))$ and is treated similarly as the first case.
   In the third case we have vector fields of the form  $X(z,w) = (f_{p+1,1}(z,w), g_{p,2}(z,w))$  and because the case $p = 0$ is already proved we assume $p > 0.$ Then  there is only one $A_{\alpha_2} = A$ and so $B_{\beta_2} + A = 0,$ therefore the vector fields are of the form
   \begin{eqnarray*}
     (f_{p+1,1}, g_{p,2})(z,w) &=& z\left( \sum\limits_{\substack{\alpha_1 \in \lbrace0,1\rbrace^{p+2},\\|\alpha_1| = p}}  (z,w)^{\alpha_1} A_{\alpha_1}, \sum\limits_{\substack{\beta_1 \in \lbrace0,1\rbrace^{p+2},\\|\beta_1| = p-1}}  (z,w)^{\beta_1} A_{\beta _1}\right) + \\
     &&+(w z^p, - w S_{p,1}(z,w))A.
   \end{eqnarray*}
   Since there are two zeroes in $\beta_1$ and one zero in $\alpha_1,$ we can apply the same transposition argument as above, but to the word of the form $1\alpha_1 = 1 \alpha_1^1 1 \alpha_1^2 0 \alpha_1^3$ and conclude, that for any two words $\alpha_1,$ $\alpha_2$  we have  $A_{\alpha_1} = A_{\alpha_2} = -B_{\beta_1}  = B,$ so
   $ (f_{p+1,1}, g_{p,2})(z,w) = z(S_{p,1}(z,w), - S_{p-1,2}(z,w))B + w( S_{p,0}(z,w), -  S_{p,1}(z,w))A$ with divergence equal to
   $$
     \mathrm{div} (f_{p+1,1}, g_{p,2})(z,w) = S_{p,1}(z,w)(B - A).
   $$
   If divergence is $0,$ then the vector field is of the form
   $ (f_{p+1,1}, g_{p,2})(z,w) = (z S_{p,1}(z,w) + w S_{p,0}(z,w), - z S_{p-1,2}(z,w)-w S_{p,1}(z,w))A =(S_{p+1,1}(z,w), - S_{p,2}(z,w))A, $ so it is BF.
\end{proof}
\noindent An immediate consequence of the proof is the following
\begin{corollary}\label{c1}
  Let $X(z,w) \in \mathcal{VH}^1$  be a vector field with divergence. Then it has a form
\begin{eqnarray*}
    X(z,w) &=& (z \sum_{p \geq 1} (S_{p,q}(z,w),- S_{p-1, q+1}(z,w))a_{p,q} +\\
            &&  w \sum_{q \geq 1}(S_{p+1,q-1}(z,w), - S_{p,q}(z,w)) b_{p,q}) +
              (g_0(w), f_0(z))
\end{eqnarray*}
and its divergence is $\div X(z,w) =\sum_{p,q \geq 0} S_{p,q}(z,w)(a_{p,q} - b_{p,q}).$
\end{corollary}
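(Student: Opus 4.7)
The plan is to read the corollary as a global reorganization, by bidegree, of the local analysis already carried out in the proof of Theorem \ref{crcthm}. Concretely, I would start by writing $X = (f,g)$ with $f = \sum_{p,q \geq 0} f_{p,q}$ and $g = \sum_{p,q \geq 0} g_{p,q}$, and observe that, since $\Div X[h] = h\,\div X(z,w)$ must hold bidegree by bidegree, the divergence equation couples the pair $(f_{p+1,q},\, g_{p,q+1})$ for every $(p,q)$ with $p,q\geq 0$. The pieces $f_{0,q}(w)$ and $g_{p,0}(z)$ contribute nothing to $\Div X$ and are free; I would collect them into the summands $g_0(w):=\sum_{q\geq 0}f_{0,q}(w)$ and $f_0(z):=\sum_{p\geq 0}g_{p,0}(z)$, producing the trailing term $(g_0(w),f_0(z))$ in the formula.

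Next, for each coupled pair I would invoke directly the analysis performed inside the proof of Theorem \ref{crcthm}. For $p,q\geq 1$ (and the explicitly treated case $q=1$) this analysis produces two scalars, which I rename as $a_{p,q}$ (playing the role of the constant called $B$ in that proof) and $b_{p,q}$ (the constant called $A$), giving
\[
 (f_{p+1,q},\,g_{p,q+1}) \;=\; z\bigl(S_{p,q},\,-S_{p-1,q+1}\bigr)\,a_{p,q}\;+\;w\bigl(S_{p+1,q-1},\,-S_{p,q}\bigr)\,b_{p,q}.
\]
The boundary pairs with $p=0$ or $q=0$, also handled in Theorem \ref{crcthm}, fit the same template under the natural convention that $S_{-1,q+1}=0$ and $S_{p+1,-1}=0$; in those cases one of the two summands collapses, as required by the corollary's index ranges $p\geq 1$ and $q\geq 1$. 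Summing the resulting identities over all bidegrees and adding back the unconstrained parts produces exactly the expression for $X(z,w)$ displayed in the corollary.

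Finally, I would compute $\div X$ by adding the pairwise contributions. Using identity (\ref{e4}) in the forms $\whp_z S_{p,q}[h] = \whp_w S_{p-1,q+1}[h]$ and $\whp_z S_{p+1,q-1}[h]=\whp_w S_{p,q}[h]$, the cross terms cancel and only the $h$-linear part survives: each pair contributes $h\,S_{p,q}(z,w)(a_{p,q}-b_{p,q})$ to $\Div X[h]$, exactly as already noted in the final lines of the proof of Theorem \ref{crcthm}. Summing over $(p,q)$ gives $\div X(z,w) = \sum_{p,q\geq 0} S_{p,q}(z,w)(a_{p,q}-b_{p,q})$.

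I expect the main obstacle to be clean bookkeeping at the boundary: matching the three regimes distinguished in Theorem \ref{crcthm} (generic $p,q$, the case $q=1$ and by symmetry $p=1$, and the edge cases $p=0$ or $q=0$) with the single uniform expression in the corollary, so that the reader can see without re-doing the case analysis that no coefficient is double-counted and no degree of freedom is omitted. Once the edge conventions $S_{-1,q+1}=S_{p+1,-1}=0$ are stated, the rest is a direct rewriting and the divergence computation reduces to the identity (\ref{e4}).
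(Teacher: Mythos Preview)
Your proposal is correct and matches the paper's approach exactly: the paper gives no separate argument but states the corollary as ``an immediate consequence of the proof'' of Theorem~\ref{crcthm}, and your plan is precisely to reassemble that proof's bidegree-by-bidegree output (with $a_{p,q}=B$, $b_{p,q}=A$) into a single global formula, reading off the divergence via identity~(\ref{e4}). Your identification of the boundary bookkeeping as the only real issue is apt; just be careful that at $p=0$ the $z$-summand does not fully collapse (only its second component $-S_{-1,q+1}$ does), so the index range in the displayed formula should really be read with the convention $S_{-1,\ast}=S_{\ast,-1}=0$ rather than as a genuine restriction.
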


\begin{definition}
Given the vector field $X(z,w)=(f(z,w), g(z,w))$,
we define the differential) operator
\[
  \mathrm{Rot}X(z,w)[h] := -\widehat{\partial}_zg(z,w)[h] + \widehat{\partial}_wf(z,w)[h]
\]
and we say that the vector field $X$ has {\em rotor} if
$ \mathrm{Rot}X(z,w)[h]$ is left $h$ linear, in other words if
there exists a function -- which will be  denoted by $\mathrm{rot}X(z,w)$ -- such that

\[
\mathrm{Rot}X(z,w)[h] = h\, \mathrm{rot} X(z,w).
\]

\end{definition}

Since $\mathrm{Rot}(f,g) = \mathrm{Div}(-g,f),$ we immediately have the following

\begin{theorem} Let $X(z,w) = (f(z,w),g(z,w)) \in \mathcal{VH}^1$ be a vector field with  rotor. Then
$\mathrm{rot} X(z,w)$ is BF. If $\mathrm{rot} X(z,w) = 0,$ then $X$ is BF and has the form
$$
    X(z,w) = \sum_{p,q \geq 1} (S_{p-1,q}(z,w), S_{p, q-1}(z,w))a_{p,q} +
              (\sum_{p \geq 0} z^p a_p,\sum_{q \geq 0} w^q b_q).
$$
  Define
$$ \chi(z,w):= \sum_{p,q \geq 1} S_{p,q}(z,w)\frac{a_{p,q}}{p+q} +
  \sum_{p \geq 0} \left( z^{p+1} \frac{a_p}{p+1}+ w^{p+1}
  \frac{b_p}{p+1}\right) + C,
$$
 where $C \in \mathbb{H}$ is an arbitrary constant. Then
  \[
    X(z,w) = ({\widetilde\partial}_z\chi(z,w), {\widetilde\partial}_w \chi(z,w)).
  \]
\end{theorem}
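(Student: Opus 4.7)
The first two assertions reduce immediately to Theorem \ref{crcthm} via the identity $\mathrm{Rot}(f,g) = \mathrm{Div}(-g,f)$ stated just before the theorem. A vector field $X=(f,g)$ has rotor if and only if $(-g,f)$ has divergence, and in that case $\mathrm{rot} X = \mathrm{div}(-g,f)$. Applying Theorem \ref{crcthm} to $(-g,f)$ therefore shows that $\mathrm{rot} X$ is bidegree full, and, when $\mathrm{rot} X=0$, that $(-g,f)$ is BF, hence so is $X$.

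For the explicit form in the zero-rotor case, I would translate the zero-divergence decomposition obtained at the end of the proof of Theorem \ref{crcthm}. That proof produces constants $A_{p,q}\in\mathbb{H}$ such that the bidegree $(p+1,q)$ and $(p,q+1)$ parts of $(-g,f)$ are $(S_{p+1,q},-S_{p,q+1})A_{p,q}$; translating back to $X=(f,g)$ gives $g_{p+1,q}=-S_{p+1,q}A_{p,q}$ and $f_{p,q+1}=-S_{p,q+1}A_{p,q}$. Setting $a_{p,q}:=-A_{p-1,q-1}$ for $p,q\geq 1$, these become $f_{P,Q}=S_{P,Q}\,a_{P+1,Q}$ for $P\geq 0, Q\geq 1$ and $g_{P,Q}=S_{P,Q}\,a_{P,Q+1}$ for $P\geq 1, Q\geq 0$, which are consistent (both read $-A_{p-1,q-1}$ after relabeling), and summing them yields $\sum_{p,q\geq 1}(S_{p-1,q},S_{p,q-1})a_{p,q}$. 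The bidegree $(P,0)$ parts of $f$ and $(0,Q)$ parts of $g$ are not constrained by the zero-rotor condition and (since $X$ is already known to be BF) contribute exactly the slice-regular tails $\sum_p z^p a_p$ and $\sum_q w^q b_q$.

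To verify the potential formula, the key combinatorial identity is
\[
\widetilde{\partial}_z S_{p,q}(z,w)=(p+q)\,S_{p-1,q}(z,w),\qquad \widetilde{\partial}_w S_{p,q}(z,w)=(p+q)\,S_{p,q-1}(z,w).
\]
Indeed, $\widetilde{\partial}_z$ replaces one $z$ at a time by $1$ in every word of $S_{p,q}$ and sums; each word of bidegree $(p-1,q)$ arises this way exactly $p+q$ times, one for each of the $p+q$ slots at which a $z$ can be inserted to recover a word of bidegree $(p,q)$ (a simple double-counting check: $p\binom{p+q}{p}=(p+q)\binom{p+q-1}{p-1}$). Combined with $\widetilde{\partial}_z z^{p+1}=(p+1)z^p$, $\widetilde{\partial}_w w^{p+1}=(p+1)w^p$ and the vanishing of the crossed derivatives, a termwise application of $\widetilde{\partial}_z$ and $\widetilde{\partial}_w$ to $\chi$ shows that the $(p+q)$ factors cancel the denominators and that $(\widetilde{\partial}_z\chi,\widetilde{\partial}_w\chi)$ matches $X$ exactly; the additive constant $C$ is annihilated.

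The main obstacle is essentially bookkeeping: keeping the re-indexing between the constants $A_{p,q}$ from Theorem \ref{crcthm} and the $a_{p,q}$ in the claimed form consistent for both components simultaneously, and separately checking the boundary cases $p=0$, $q=0$, $q=1$ treated at the end of that proof so that they contribute only the slice-regular tails $\sum_p z^p a_p$ and $\sum_q w^q b_q$ and no mixed cross-tail.
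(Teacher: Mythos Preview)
Your proposal is correct and follows exactly the paper's approach: the reduction to Theorem~\ref{crcthm} via $\mathrm{Rot}(f,g)=\mathrm{Div}(-g,f)$ is precisely what the paper invokes (in the sentence preceding the theorem), and the paper's actual proof consists solely of stating the identity $\widetilde{\partial}_z S_{p,q}=(p+q)S_{p-1,q}$, $\widetilde{\partial}_w S_{p,q}=(p+q)S_{p,q-1}$, which you also identify as the key step and even justify combinatorially. Your write-up simply supplies the bookkeeping (the translation from the $A_{p,q}$ of Theorem~\ref{crcthm} to the $a_{p,q}$ here, and the handling of the pure-$z$ and pure-$w$ tails) that the paper leaves entirely implicit.
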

\begin{proof}
By definition (\ref{df2}) of derivatives ${\widetilde\partial}_z$ and ${\widetilde\partial}_w$   we have
$$ {\widetilde\partial}_z S_{p,q}(z,w) = (p+q)S_{p-1,q}(z,w) \mbox{
  and } {\widetilde\partial}_w S_{p,q}(z,w) = (p+q)S_{p,q-1}(z,w).
$$
\end{proof}

\begin{definition}
  Let $D \subset \mathbb{H}^2 \times \Rr$ be an open set containing $\mathbb{H}^2 \times\{0\}.$ A function
  $$
    \Phi^X:  D \rightarrow \mathbb{H}^2
  $$
  is a {\em flow} of the vector field $X$ if
  $$
    \frac{d}{dt}\Phi^X(z,w,t) = X(\Phi^X(z,w,t)), \quad \forall (z,w,t) \in D.
  $$
and
\[\Phi^X(z,w,0) = (z,w), \quad \forall (z,w) \in  \mathbb{H}^2.\]

  If $D = \mathbb{H}^2 \times \Rr,$ we say that a vector field $X$ is complete.
\end{definition}
Whenever it is clear from the context which vector field we are
referring to, we  omit the superscript $X$.

\example\label{SHOS}
 Consider the vector fields
$$
  X(z,w) = (f(w), 0) \mbox{ and } Y(z,w) = (z g(w),0)
$$
with $f$ and $g$ slice--regular functions
 defined on $\mathbb{H}.$
We have
$$
  \div X(z,w) = 0 \mbox{ and } \div Y(z,w) = g(w).
$$
The corresponding flows are
\begin{equation}\label{e7}
  \Phi^X(z,w,t) = (z,w) + t (f(w),0) \mbox{ and } \Phi^Y(z,w,t)= (z,w) + (z (e^{t g(w)}-1),0)
\end{equation}
and the vector fields are complete. The exponential function is defined by series expansion,
$$
  e^{t g(w)} = \sum_{0}^{\infty}\frac{t^n g(w)^n}{n!}
$$
and is not a slice--regular function in general.

\example \label{TheExample} The vector field $X(z,w) = (z^2w, - z
w^2)$ is complete with a flow $\Phi^X(z,w,t) = (z e^{t z w}, e^{-tzw}
w) = (u,v):$ 
\begin{eqnarray*}
  \frac{d}{dt}(z e^{t z w}, e^{-tzw} w)&=&(z e^{t z w}zw, -zw e^{-tzw} w)\\
  &=& ((z e^{t z w})(z e^{t z w})(e^{-t z w} w), -(ze^{t z w})(e^{-t z w}w)( e^{-tzw} w))\\
  &=& (u^2v, - u v^2).
\end{eqnarray*}
 Because $\Div X(z,w)[h] = hzw-zwh,$ the vector field $X$ does not have  divergence.

\section{Quaternionic Determinants and applications to Vector Fields of Shear and Overshear Automprphisms}

This chapter is mainly devoted to the study
of special classes of vector fields which are generalizations
of the two
vector fields from  example (\ref{SHOS}).
We  focus, in particular,  on  the geometric  properties of
the divergence  of the  flows of these vector fields.

If $A$ is an invertible real matrix
$$
   \begin{bmatrix}
        a&b\\
        c&d
       \end{bmatrix}\in GL(n,\mathbb{R})
  $$
and $f  \in {\mathcal H}(\mathbb{H})$,
we consider
the vector field
$$
  X(z,w) = \frac{1}{ad - bc}(d,-c) f(cz + dw).
  $$
  If $\pi_2:\mathbb{H}^2\to \mathbb{H}$ is the projection onto the second coordinate, one can write
  $X(z,w) = A^{-1} (f \circ \pi_2,0)^T ( A \cdot (z,w)^T)$
  Notice that if $d = 0,$ the vector field is of the form $(0,g(z))$
  and if $c = 0$ is of the form $(g(w),0)$ for a suitable $g \in {\mathcal H}(\mathbb{H})$.
  In both cases, the vector field $X$  has divergence $0.$

  Assume now that $c \ne 0$. Then
\begin{eqnarray*}
  \Div X(z,w)[h] &=& \frac{1}{ad - bc} (\whp_z f(cz + dw)[h]  d +\whp_w f(cz + dw)[h] (-c) \\
             &=& \frac{1}{ad - bc} (\whp_z f(cz + dw)[h] d +\whp_z f(cz + dw)[h]c^{-1} d (-c)=0 \\
\end{eqnarray*}
If $c \ne 0,$  we may assume that $c = -1$.
If we write $d = \mu$,  the vector field $X$ can be written in a form
$$
  X(z,w) = (\mu,1)\tilde{f}(z - \mu w)
$$
for some other slice-regular function $\tilde {f}.$
Notice that the vector field $X$ is in the kernel of the functional $\Lambda (z,w) = z - \mu w,$ i.e. $\Lambda(X) = 0.$

If $\pi_1:\mathbb{H}^2\to \mathbb{H}$ is the projection onto the first coordinate, consider the vector field
\begin{equation}\label{osvf}
 Y(z,w) = A^{-1}  (\pi_1 \cdot f \circ \pi_2,0)^T ( A\cdot (z,w)^T)= \frac{1}{ad - bc}(d,-c) (az + b w) f(cz + dw).
\end{equation}
It has divergence
\begin{eqnarray*}
  \Div Y(z,w)[h] &=& \frac{1}{ad - bc}\left[(az + bw) (\whp_z f(cz + dw)[h]  d +\whp_w f(cz + dw)[h] (-c))\right. \\
      && \left.+ (a d - b c) h f(cz + dw) \right] = h f(cz + dw).
\end{eqnarray*}
Similarly s before, $\Lambda(Y) = 0$ for $\Lambda(z,w) = z - \mu w.$

\begin{definition}
Let $\pi_1, \pi_2$ denote the projections of $\mathbb{H}^2$ on the
first and second coordinate respectively.  We define the following
two  classes of vector fields:
$$\begin{array}{rcl}
  SV_{\mathbb{R}} &=& \lbrace X, \, X(z,w) = A^{-1} (f \circ
    \pi_2,0)^T ( A \cdot (z,w)^T),\; A \in SL(2,\mathbb{R}), f \in \mathcal {S}\mathcal{ R}(\mathbb{H}) \rbrace,\\
 OV_{\mathbb{R}} &=& \lbrace Y, \,
       Y(z,w) = A^{-1} (\pi_1 \cdot f \circ \pi_2,0)^T ( A\cdot
       (z,w)^T),\; A \in GL(2,\mathbb{R}), f \in {\mathcal
         H}(\mathbb{H}) \rbrace.
\end{array}$$
The classes $SV_{\mathbb{R}}$ and $OV_{\mathbb{R}}$ are called {\em
  shear} and {\em overshear vector fields} respectively.

\end{definition}

The space of all shears $SV_{\mathbb{R}}$ can also be described as
$$
  SV_{\mathbb{R}} =  \lbrace (r,1) f(z - rw), \; r \in \mathbb{R},
f \in {\mathcal SR}(\mathbb{H}) \rbrace\cup
\lbrace (g(w),0)\;  g\in {\mathcal SR}(\mathbb{H}) \rbrace
$$

\begin{lemma}\label{diver}  For each $p,q$ there exists a vector field $Y_{p,q}$ with $\div Y_{p,q}(z,w) = S_{p,q}(z,w)$ and it is a sum of overshear vector fields.
\end{lemma}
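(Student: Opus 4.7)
The idea is to combine Lemma \ref{l1} (which decomposes any bidegree full polynomial into real linear combinations of the ``diagonal'' basis elements $(z-nw)^d$) with the explicit overshear divergence formula produced in the derivation of (\ref{osvf}), which already gives a recipe for a vector field with prescribed slice--regular divergence along a real direction.

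First, I would apply Lemma \ref{l1} to the bidegree full polynomial $S_{p,q}(z,w)$ of total degree $d=p+q$ to write
\[
  S_{p,q}(z,w) = \sum_{n=0}^{p+q} (z-nw)^{p+q}\, r_{p,p+q}(n), \qquad r_{p,p+q}(n) \in \mathbb{R}.
\]
This reduces the problem to producing, for each $n \in \{0,1,\ldots,p+q\}$, an overshear vector field whose divergence is the single power $(z-nw)^{p+q}\, r_{p,p+q}(n)$.

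Second, for each such $n$ I would construct the overshear via the recipe in (\ref{osvf}). Choose the real matrix
\[
  A_n := \begin{bmatrix} 0 & 1 \\ 1 & -n \end{bmatrix} \in GL(2,\mathbb{R}),
\]
so that $A_n(z,w)^T = (w,\, z-nw)^T$, together with the slice--regular profile $f_n(u) := u^{p+q}\, r_{p,p+q}(n)$. Formula (\ref{osvf}) then produces a vector field $Y_n \in OV_{\mathbb{R}}$, and the divergence computation carried out immediately after (\ref{osvf}) gives
\[
  \mathrm{div}\, Y_n(z,w) = f_n(z-nw) = (z-nw)^{p+q}\, r_{p,p+q}(n).
\]

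Finally, I would set $Y_{p,q} := \sum_{n=0}^{p+q} Y_n$. Since $\widehat{\partial}_z$ and $\widehat{\partial}_w$ are additive, $\mathrm{Div}$ is additive, and hence
\[
  \mathrm{div}\, Y_{p,q}(z,w) = \sum_{n=0}^{p+q} (z-nw)^{p+q}\, r_{p,p+q}(n) = S_{p,q}(z,w),
\]
and by construction $Y_{p,q}$ is a finite sum of elements of $OV_{\mathbb{R}}$. The only step requiring any care is verifying that the divergence formula (\ref{osvf}) truly delivers the desired slice--regular divergence $f(cz+dw)$, and that the real scalar $r_{p,p+q}(n)$ can be absorbed into $f_n$ without leaving the overshear class; both are immediate (the former from the calculation preceding the definition of $OV_{\mathbb{R}}$, the latter because the space of slice--regular profiles is closed under multiplication by real constants). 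I therefore expect no genuine obstacle.
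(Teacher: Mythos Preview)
Your proposal is correct and follows essentially the same approach as the paper: decompose $S_{p,q}$ via Lemma~\ref{l1} into a real combination of the powers $(z-nw)^{p+q}$, then realize each such term as the divergence of an overshear vector field and sum. The only difference is cosmetic: the paper writes down the explicit overshear $Y_{p,q}(z,w)=\sum_{n=0}^{p+q}(n,1)(z+nw)(z-nw)^{p+q}\frac{-r_n}{n^2+1}$, whereas your matrix $A_n$ produces the (equally valid) overshear $(n,1)\,w\,(z-nw)^{p+q}r_n$; both lie in $OV_{\mathbb{R}}$ and both have divergence $(z-nw)^{p+q}r_n$.
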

\begin{proof}
Since $S_{p,q}(z,w) = \sum\limits_{n=0}^{p+q} (z - nw)^{p+q} r_{n}, r_n \in \mathbb{R}$ by formula (\ref{decomp}), the vector field is $$Y_{p,q}(z,w) =\sum_{n=0}^{p+q} (n,1)(z + nw)(z - nw)^{p+q}
\frac{-r_{n}}{n^2 + 1}.$$
\end{proof}

\begin{proposition}\label{p1}
Any polynomial vector field $X \in {\mathcal VH}^1$ with divergence is a
finite sum of shear  and overshear
 vector fields. If $\mathrm{div} X =0,$ then $X$ can
be written as a sum of shear vector fields.
\end{proposition}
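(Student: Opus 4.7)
The plan is to use overshears to cancel $\mathrm{div}\,X$, reducing to the divergence-zero case, and then express a divergence-zero polynomial vector field as a sum of shears.

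\textbf{Step 1 (reduction to divergence zero).}  By Theorem \ref{crcthm}, $\mathrm{div}\,X$ is a bidegree full polynomial, so $\mathrm{div}\,X=\sum_{p,q}S_{p,q}(z,w)c_{p,q}$ is a finite sum with $c_{p,q}\in\mathbb{H}$.  Lemma \ref{diver} produces, for each $(p,q)$, a vector field $Y_{p,q}$ which is a finite sum of overshears and satisfies $\mathrm{div}\,Y_{p,q}=S_{p,q}$.  The overshear class is stable under right $\mathbb{H}$-scalar multiplication (absorb the scalar into the defining slice--regular function $f$), so $Y:=\sum_{p,q}Y_{p,q}c_{p,q}$ is a finite sum of overshears with $\mathrm{div}\,Y=\mathrm{div}\,X$.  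Thus $Z:=X-Y$ is a polynomial vector field in $\mathcal{VH}^1$ with $\mathrm{div}\,Z=0$.

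\textbf{Step 2 (structure of $Z$).}  By Theorem \ref{crcthm}, $Z$ is bidegree full.  The case analysis in the proof of Theorem \ref{crcthm} (specialised to $\mathrm{div}\,Z=0$), combined with the elementary identity $S_{p+1,q}=zS_{p,q}+wS_{p+1,q-1}$ obtained by splitting each word according to its first letter, shows that the total-degree-$d'$ component of $Z$ takes the form
\[
Z_{d'}=\sum_{\substack{p,q\ge 0\\ p+q+1=d'}}(S_{p+1,q},-S_{p,q+1})A_{p,q}+(\beta_{d'}w^{d'},0)+(0,\alpha_{d'}z^{d'}),
\]
with $A_{p,q},\alpha_{d'},\beta_{d'}\in\mathbb{H}$, only finitely many nonzero.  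The summand $(\beta_{d'}w^{d'},0)$ is a shear of type $(g(w),0)$, and $(0,\alpha_{d'}z^{d'})$ is a shear obtained from the matrix $\bigl(\begin{smallmatrix}0&1\\-1&0\end{smallmatrix}\bigr)\in SL(2,\mathbb{R})$ in the definition of $SV_{\mathbb{R}}$.

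\textbf{Step 3 (mixed terms as shears, and conclusion).}  By Lemma \ref{l1}, $\{(z-nw)^{d'}\}_{n=0}^{d'}$ is a basis of $\mathbb{H}^{BF}_{d'}[z,w]$ associated with the invertible Vandermonde matrix $V_{n,b}=(-n)^b$; set $W:=V^{-1}$.  One checks the identity
\[
(S_{p+1,q},-S_{p,q+1})=-\sum_{n=0}^{d'}W_{q+1,n}\,(n,1)(z-nw)^{d'},
\]
the second component being immediate from $\vec S=W\vec Z$ and the first following from the Vandermonde computation $\sum_{n}n\,W_{q+1,n}(-n)^{b}=-\delta_{q,b}$.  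Each summand is a real multiple of a shear $(n,1)(z-nw)^{d'}\in SV_{\mathbb{R}}$, and closure of shears under right $\mathbb{H}$-multiplication lets us absorb $A_{p,q}$.  Summing, $Z$ is a finite sum of shears, so $X=Y+Z$ is a finite sum of shears and overshears; if $\mathrm{div}\,X=0$ to begin with, then $Y=0$ and $X$ is a sum of shears alone.  The main obstacle is the explicit Vandermonde inversion in Step 3; a conceptual alternative avoids it by a dimension count, observing that by Step 2 the divergence-free polynomial vector fields in $\mathcal{VH}^1$ at total degree $d'$ form a right $\mathbb{H}$-module of dimension $d'+2$, while the $d'+2$ shears $\{(n,1)(z-nw)^{d'}\}_{n=0}^{d'}\cup\{(w^{d'},0)\}$ are $\mathbb{H}$-linearly independent (check the second component, using Lemma \ref{l1}), hence span.
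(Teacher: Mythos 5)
Your proof is correct in substance and follows the same two--stage architecture as the paper's: first cancel the divergence by subtracting the overshear sums supplied by Lemma \ref{diver}, right--multiplied by the coefficients of the BF polynomial $\mathrm{div}\,X$ (this is exactly the paper's first step), and then show that a homogeneous divergence--free field of degree $d'$ is a right--$\mathbb{H}$ combination of the shears $(n,1)(z-nw)^{d'}$, $n=0,\dots,d'$, together with $(w^{d'},0)$. Where you genuinely differ is in the execution of the second stage: the paper expands \emph{both} components of $X_{d'}$ in the basis $\{(z-nw)^{d'}\}$ from the outset, uses $\whp_w(z-nw)^{d'}=-n\,\whp_z(z-nw)^{d'}$ to reduce $\mathrm{div}\,X_{d'}=0$ to the single condition $\sum_n(z-nw)^{d'}(a_{n}-nb_{n})=w^{d'}q$, and solves for the $a_n$; it therefore only needs the one expansion $w^{d'}=\sum_n(z-nw)^{d'}\lambda_n$ rather than the full inverse Vandermonde matrix. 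You instead quote the $S_{p,q}$--normal form extracted from the proof of Theorem \ref{crcthm} and then change basis. Both are legitimate; the paper's route is computationally lighter, yours makes the module structure of the divergence--free fields more explicit.

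One concrete slip: the displayed identity for $(S_{p+1,q},-S_{p,q+1})$ is off in the first component. The computation $\sum_n nW_{q+1,n}(-n)^b=-\sum_nW_{q+1,n}(-n)^{b+1}=-\delta_{q,b}$ is valid only for $b\le d'-1$; for $b=d'$ the power $(-n)^{d'+1}$ falls outside the columns of $V$, and the right--hand side of your identity acquires an extra term $\bigl(e_q w^{d'},0\bigr)$ with $e_q=\sum_nW_{q+1,n}(-n)^{d'+1}$. Already for $d'=1$, $q=0$ your formula returns $(z-w,-w)$ rather than $(z,-w)$. This is harmless for the proposition, since the discrepancy is itself a shear of type $(g(w),0)$, so the corrected identity still writes each generator as a real combination of shears; moreover, your dimension--count alternative is sound as stated ($d'+2$ right--$\mathbb{H}$--independent shears inside a free right $\mathbb{H}$--module of rank $d'+2$ over the division ring $\mathbb{H}$ must span). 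Either repair closes the argument.
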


\begin{proof} Let $X = \sum_d X_d$ be the homogenous expansion of a vector field $X$.
   Since divergence of $X$ is bidegree full, by Lemma \ref{diver} there exists a vector field $Y$, which is a sum of overshear vector fields,  such that $\div X = \div Y,$ so it is sufficient
to prove that every divergence zero vector field is a sum of shear
vector fields.  Since the operator $\Div$ respects the degree in the expansion,  it suffices to prove the assertion for
each fixed degree.
Now assume that $\div X_d = 0.$ Because of Lemma
\ref{l1}, we can write $X_d$ as
\[
  X_d(z,w) = \left(\sum_{n = 0}^d (z - n w)^da_{n,d},\sum_{n = 0}^d (z - nw)^db_{n,d}\right).
  \]
  Therefore
\begin{eqnarray*}
\mathrm{Div}X_d(z,w)[h] &=& \sum_{n = 0}^d \whp_z(z - n w)^{d}[h]a_{n,d} -
\sum_{n = 0}^d \whp_z(z - n w)^{d}[h] j b_{n,d}\\
&=&
\whp_z\left( \sum_{n = 0}^d (z - n w)^d(a_{n,d} -n b_{n,d}) \right)[h],
\end{eqnarray*}
so the condition $\mathrm{Div}X_d(z,w)[h]=0$ and Lemma \ref{lem1} imply
$$
  \sum_{n = 0}^d (z - n w)^d(a_{n,d} -j b_{j,d}) = w^d q
$$ for some $q \in \mathbb{H}.$ Since the monomials $(z - n w)^d, n =
  0,\ldots,d$ are generators of all BF polynomials, there exist
  constants $\lambda_0,\ldots\lambda_d$ such that
$$
  w^d =  \sum_{n = 0}^d (z - n w)^d \lambda_n.
$$ So we have $ \lambda_nq = a_{n,d} -n b_{n,d}$ and then $a_{n,d} =
  \lambda_n q+ n b_{n,d}.$
  In other words,
\begin{eqnarray*}
   X_d(z,w) &=& \left(\sum_{n=0}^d j(z - n w)^db_{n,d}  + \lambda_n q, \sum_{n =0}^d (z - n w)^d b_{n,d} \right)\\
     &=&\sum_{n=0}^{d}(n,1) (z - nw)^d b_d + (1,0) \sum_{n=0}^{d} (z - nw)^d \lambda_n q\\
     &=& \sum_{n=0}^{d}(n,1) (z - nw)^d b_d + (1,0) w^d q.
\end{eqnarray*}
As easily checked, all  vector fields in the last sum have divergence $0.$
\end{proof}

Passing from a real to a quaternionic matrix, we have to point out
that there is no canonical way to define the determinant of such a
matrix. We consider only $2\times 2$ matrices but we refer the
reader to \cite{As} and \cite{H} for further references on general
linear groups and determinants.  There are several possibilities of
introducing a generalization of the standard notion of determinant
according to the properties one is looking at. For example, the real
determinant $\det_{\mathbb{R}}$ and the complex determinant
$\det_{\mathbb{C}}$ of a quaternionic matrix are defined when a
quaternionic matrix is considered as the corresponding real or complex
matrix obtained via the identification of $\mathbb{H}$ with $
\mathbb{R}^4$ or with $ \mathbb{C}^2$ respectively-

 If
$$
  A = \begin{bmatrix}
   {{a}} & {{b}}  \\
   {c} & {d}
          \end{bmatrix},
$$ ($a,b,c,d\in\mathbb{H}$) we define
the {\em Cayley determinant} of $A$ to be
$$
  \det_C A=ad - cb.
  $$
  If $b = a$ and $c=d,$ the rank of the matrix is $1$ and the
  determinant is $ac - ca,$ which is $0$ iff $a$ and $c$
  commute. Another interesting definition is
  Dieudonn\'{e} determinant $\det_D$. The {\em Dieudonn\'{e} determinant} is defined as a
  mapping from $M(2,\mathbb{H})$ to a quotient $Q$ of the multiplicative
  subgroup $\mathbb{H}^*$ of $\mathbb{H}$ to its quotient by a
  commutator subgroup, $Q =\mathbb{H}^*/[\mathbb{H}^*,\mathbb{H}^*].$ The group $Q$
 is isomorphic to $\mathbb{R}_+,$ because the commutator
  subgroup consists precisely of all quaternionic units. For example,
  the representative of $\det_DA$ in $Q$ is defined as
$$
  \det_D A = \left\lbrace{\begin{array}{*{20}c} -cb & \text{if } a = 0 \\ ad - aca^{-1}b & \text{if } a \ne 0 \end{array}}\right. .
$$

  The quaternionic determinants $\det_D, \det_{\mathbb{R}}$ and $\det_{\mathbb{C}}$  satisfy
  the three following axioms: the determinant is $0$ if and only if the matrix
  is singular, the determinant of a product of matrices is a product
  of determinants and a particular Gaussian elimination is allowed.

  It is important to observe that the operator
$\diamond$ as in (\ref{diamond}) is not a product and therefore in general, no matter which
definition of the determinant we adopt, the determinant of a composed
mapping introduced by using $\diamond$  is not necessarily a product of determinants.


Therefore the following two groups of transformations
$$
  SL(2,\mathbb{H}),  \mbox{ and } GL(2,\mathbb{H})
$$
can be properly and correctly defined.



\begin{definition}
Let $\pi_1, \pi_2$ denote the projections of $\mathbb{H}^2$ on the
first and second coordinate respectively.  We define the following
two  classes of vector fields:
$$\begin{array}{rcl}
 SV_{\mathbb{H}} &=&       \lbrace X, \, X(z,w) = A^{-1} (f \circ \pi_2,0)^T ( A \cdot
       (z,w)^T),\; A \in SL(2,\mathbb{H}), f \in \mathcal{S}\mathcal{R}(\mathbb{H}) \rbrace,
\\ OV_{\mathbb{H}} &=& \lbrace Y, \,
       Y(z,w) = A^{-1} (\pi_1 \cdot f \circ \pi_2,0)^T ( A\cdot
       (z,w)^T),\; A \in GL(2,\mathbb{H}), f \in {\mathcal
         H}(\mathbb{H}) \rbrace.
\end{array}
$$

The classes  $SV_{\mathbb{H}}$ and $OV_{\mathbb{H}}$ are called
 {\em generalized shear} and {\em generalized overshear vector fields} respectively.
\end{definition}

\example Consider  the matrix
\begin{equation*}
  A  = \begin{bmatrix}
        \bar{\mu}&1\\
        1&-\mu
       \end{bmatrix} (1 + |\mu|^2)^{-1}, \; \mu \in \mathbb{H}.
\end{equation*}
Since the entries commute, the formula for the inverse $A^{-1}$ is the
same as in the commutative case and so the conjugation by such $A$ defines
a $OV_{\mathbb{H}}$ vector field in the same manner as in (\ref{osvf}). Unfortunately these vector fields do
not have divergence.  In fact, from the previous computation we have
\begin{eqnarray*}
  Y(z,w) &=& (\mu,1) (\bar{\mu}(1 +|\mu|^2)^{-1}z +(1 +|\mu|^2)^{-1} w) f((1 +|\mu|^2)^{-1} z -\mu (1 +|\mu|^2)^{-1} w)\\
  &=&(\mu,1)(az + bw)f(bz - aw)
\end{eqnarray*}
where $a: = \bar{\mu}(1 +|\mu|^2)^{-1} = (1 +|\mu|^2)^{-1}\bar{\mu}$ and $ b :=(1 +|\mu|^2)^{-1}.$ Notice that $\mu a  + b = 1.$
Then
\begin{eqnarray*}
  \Div Y(z,w)[h] &=& \left[\mu (a z +b w)(\whp_z f(bz - aw)[h]) +(a z +b
    w)\whp_w f(bz -a w)[h])\right. \\ && \left.+ (\mu a h + b h) f(bz -
    aw) \right] \\ &=& \left[\mu (a z +b w)(\whp_z f(bz - aw)[h]) +(a z
    +b w)\whp_w f(bz -a w)[h])\right]+\\ && h f(bz -aw),
\end{eqnarray*}
since $\mu ah + bh = h.$
The term in the brackets is not necessarily $0$ since the chain rule does not apply and $\mu$ is not real.
For example, a suitable choice of $f$ gives
$Y(z,w) = (\mu,1)(\bar{\mu}z + w)(z -\mu w)$ and then
\begin{eqnarray*}
\Div Y(z,w) [h]&= &h(1 + |\mu|^2)(z - \mu w) + \mu (\bar{\mu}z + w)(h)  - (\bar{\mu}z + w)(\mu h)\\
  &=& h(1 +|\mu|^2)(z - \mu w)^2 + \mu w h - w \mu h +|\mu|^2z h - \bar{\mu}zjh
\end{eqnarray*}
so $Y$ does not have  divergence.  Similarly, the vector field of the form
$X(z,w) =(\mu,1)f(z - \mu w)$ does not have  divergence and actually $\Div
X(z,w)[h] = \mu \whp_z f(z - \mu w) - \whp_wf(z - \mu w).$ This is $0$ if and
only if $\mu$ commutes with $w$ and $z,$ i.e. $\mu \in \Rr.$\\

\noindent The generalized shear and overshear vector fields, however, are complete. Indeed,
\begin{lemma} Let $X$ be a vector field with a (real) flow $\Phi^X.$ Let $A \in GL(2,\mathbb{H})$ and
  consider the  {\em conjugate} of $X$ i.e. $Y= A^{-1}X \circ A$. Then the flow  of $Y$ is
$$
 \Phi^Y= \Phi^{A^{-1} X \circ A} = A^{-1} \Phi^X \circ A.
$$
\end{lemma}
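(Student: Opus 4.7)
The plan is to verify directly that the candidate $\Psi(z,w,t) := A^{-1} \Phi^X(A\cdot(z,w)^T, t)$ satisfies the two defining properties of the flow of $Y$, namely the initial condition $\Psi(z,w,0) = (z,w)$ and the ODE $\frac{d}{dt}\Psi(z,w,t) = Y(\Psi(z,w,t))$. Uniqueness of solutions of ODEs with real time parameter (applied componentwise to the underlying real-analytic structure of $\mathbb{H}^2 \cong \mathbb{R}^8$) then forces $\Phi^Y = \Psi$.

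First I would check the initial condition: since $\Phi^X(\cdot,0) = \mathrm{id}$ on $\mathbb{H}^2$, we have $\Psi(z,w,0) = A^{-1}\Phi^X(A\cdot(z,w)^T, 0) = A^{-1}A\cdot(z,w)^T = (z,w)$.

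Next comes the ODE. The key observation is that $A$ is a \emph{constant} matrix in $GL(2,\mathbb{H})$, and the flow parameter $t$ is \emph{real}. Hence left multiplication by $A^{-1}$ -- which amounts to taking $\mathbb{H}$-linear combinations of the two components -- commutes with $\frac{d}{dt}$:
\[
\frac{d}{dt}\Psi(z,w,t) \;=\; A^{-1}\,\frac{d}{dt}\Phi^X(A\cdot(z,w)^T, t) \;=\; A^{-1}\,X\bigl(\Phi^X(A\cdot(z,w)^T, t)\bigr),
\]
where in the last equality I use that $\Phi^X$ is the flow of $X$. On the other hand,
\[
Y(\Psi(z,w,t)) \;=\; A^{-1}X\bigl(A\cdot\Psi(z,w,t)\bigr) \;=\; A^{-1}X\bigl(A A^{-1}\Phi^X(A\cdot(z,w)^T, t)\bigr) \;=\; A^{-1}X\bigl(\Phi^X(A\cdot(z,w)^T, t)\bigr),
\]
so the two sides agree.

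The main -- and really only -- subtle point is that in the quaternionic setting the chain rule for $\widehat{\partial}$ is delicate (this is precisely why the paper introduced the operator $\diamond$ in \eqref{diamond}). However, here we are differentiating in the \emph{real} variable $t$, not in the quaternionic variables, and we are composing with the \emph{constant linear} map $A^{-1}$ rather than with a nonlinear quaternionic function; so no quaternionic chain rule is invoked, and the usual real-variable rules suffice. This is what makes the computation go through cleanly.
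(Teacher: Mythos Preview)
Your proof is correct and follows essentially the same approach as the paper's: both verify directly that $A^{-1}\Phi^X\circ A$ satisfies the flow ODE for $Y$, with the key observation being that differentiation in the real parameter $t$ commutes with left multiplication by the constant quaternionic matrix $A^{-1}$. You add the explicit check of the initial condition and the appeal to ODE uniqueness, which the paper leaves implicit, but the core computation is identical.
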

\begin{proof}
  Since  in the flow the time
$t$  is real, the derivation with respect to
  $t$ commutes with multiplication by a quaternionic matrix and so
  \begin{eqnarray*}
  \frac{d}{dt} A^{-1} \Phi^X \circ A &=& A^{-1} \left(\frac{d}{dt}\Phi^X\right) \circ A\\
  &=&  A^{-1} X \circ \Phi^X \circ A =  A^{-1} X \circ A \circ A^{-1} \Phi^X \circ A =\\
  &=& (A^{-1} X \circ A)\circ (A^{-1} \Phi^X\circ A),
  \end{eqnarray*}
  which proves that $A^{-1} \circ\Phi^X A$ is a flow of the vector field $A^{-1}\circ X A.$
\end{proof}
\begin{example}
\noindent The vector fields
\begin{eqnarray*}
 &&X(z,w) = (\mu,1)f(z - \mu w),\\
 &&Y(z,w) = (\mu,1)(|\mu|^2 + 1)^{-1}(\bar{\mu} z + w)f(z - \mu w)
\end{eqnarray*}
are obtained from vector fields in the example (\ref{SHOS}) by conjugation by suitable matrices, and therefore 
have the flows
\begin{eqnarray*}
   &&\Phi^X(z,w,t) = (z,w) + t(\mu ,1)f(z - \mu w), \\
   &&\Phi^Y(z,w,t) = (z,w) + (\mu ,1)(|\mu|^2 + 1)^{-1}(\bar{\mu} z + w)(e^{tf(z - \mu w)} - 1).
\end{eqnarray*}
\end{example}

\begin{definition}
Let $\Lambda: \mathbb{H}^2 \ra \mathbb{H}$ be a right $\mathbb{H}$--linear functional. Assume $v =
(v_1, v_2) \in \mathrm{ker} \Lambda,$ $\|(v_1,v_2)\| = 1.$
For  $f \in {\mathcal H},$ any  mapping of the form
$$ (z,w) \mapsto (z,w) + (v_1,v_2) f(\Lambda(z,w))
$$
is called a {\em generalized shear}. A generalized shear is a {\em shear} if $ v_1, v_2 \in \mathbb{R},$ $\Lambda$ is represented by a real matrix, and $f$ is slice--regular.
We denote the class of generalized shears as $\mathcal{S}_{\mathbb{H}}$ and the class of shears as $\mathcal{S}_{\mathbb{R}}$.

Analogously, a mapping of the form
$$
  (z,w) \rightarrow (z,w) + (v_1,v_2)(\bar{v}_1 z + \bar{v}_2 w)(e^{ f(\Lambda(z,w))} - 1),
$$   $f \in {\mathcal H},$  is called a {\em generalized overshear}. A generalized overshear is an {\em overshear} if $
v_1, v_2 \in \mathbb{R},$  $\Lambda$ is represented by a real matrix, and $f$ is slice--regular.
We denote the class of generalized overshears as $\mathcal{O}_{\mathbb{H}}$ and the class of overshears as $\mathcal{O}_{\mathbb{R}}$.

\end{definition}
For each fixed $t$ the flows of (generalized) shear or overshear
vector fields are (generalized) shears or overshears.
\begin{lemma}
(Generalized) shears and overshears are time one maps of complete flows and therefore are automorphisms with (generalized) shears and overshears as inverses.
\end{lemma}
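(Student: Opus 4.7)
The plan is to realize each (generalized) shear or overshear as the time-one map of an explicit complete vector field, then deduce invertibility from the group property of flows.

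For a generalized shear $F(z,w) = (z,w) + (v_1,v_2)f(\Lambda(z,w))$, I would take the vector field $X(z,w) := (v_1,v_2)f(\Lambda(z,w))$ and claim that
$$
  \Phi^X(z,w,t) = (z,w) + t(v_1,v_2)f(\Lambda(z,w))
$$
is its (complete) flow on $\mathbb{H}^2 \times \mathbb{R}$. The verification rests on a single observation: since $\Lambda$ is right $\mathbb{H}$-linear and $\Lambda(v_1,v_2) = 0$, one has $\Lambda(\Phi^X(z,w,t)) = \Lambda(z,w)$ for every $t$, so $f(\Lambda(\cdot))$ is constant along trajectories and the ODE $\dot\Phi^X = X\circ\Phi^X$ reduces to a tautology. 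Setting $t = 1$ recovers $F$.

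For a generalized overshear $G(z,w) = (z,w) + (v_1,v_2)(\bar v_1 z + \bar v_2 w)(e^{f(\Lambda(z,w))} - 1)$, I would take $Y(z,w) := (v_1,v_2)(\bar v_1 z + \bar v_2 w)f(\Lambda(z,w))$ and propose
$$
  \Phi^Y(z,w,t) = (z,w) + (v_1,v_2)(\bar v_1 z + \bar v_2 w)(e^{tf(\Lambda(z,w))} - 1)
$$
as its complete flow. This verification needs two conservation laws along the flow: first, $\Lambda(\Phi^Y(z,w,t)) = \Lambda(z,w)$, exactly as above; second, $\bar v_1 z(t) + \bar v_2 w(t) = (\bar v_1 z + \bar v_2 w)e^{tf(\Lambda(z,w))}$, a short direct computation that hinges on the unit-norm normalization $|v_1|^2 + |v_2|^2 = 1$. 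Once these invariances are in hand, the ODE reduces to $\frac{d}{dt}e^{tf(\lambda)} = f(\lambda)e^{tf(\lambda)} = e^{tf(\lambda)}f(\lambda)$, which follows from the power-series definition of the exponential since $f(\lambda)$ commutes with its own exponential.

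Since both flows are defined for every real $t$, the vector fields $X$ and $Y$ are complete and the time-one maps are automorphisms with inverses given by the time-$(-1)$ maps. Substituting $t = -1$ into the flow formulas identifies $\Phi^X(\cdot,-1)$ as the generalized shear with $f$ replaced by $-f$ and $\Phi^Y(\cdot,-1)$ as the generalized overshear with $f$ replaced by $-f$; because the classes $\mathcal{H}$ and $\mathcal{S}\mathcal{R}(\mathbb{H})$ are closed under negation and the data $(v_1,v_2,\Lambda)$ are unchanged, the inverses lie in the same class as the original mappings (shear or generalized shear, overshear or generalized overshear). The argument is essentially a verification with no genuine obstacle; the only mildly subtle point is the second conservation law for the overshear, which uses the unit-norm normalization in a way the shear case does not.
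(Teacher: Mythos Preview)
Your proposal is correct and follows essentially the same route as the paper: you exhibit the same vector fields $X(z,w)=(v_1,v_2)f(\Lambda(z,w))$ and $Y(z,w)=(v_1,v_2)(\bar v_1 z+\bar v_2 w)f(\Lambda(z,w))$ with the same explicit flows, and you read off the time-one maps and their inverses. Your write-up is in fact more complete than the paper's, which simply states the vector fields and flows without spelling out the conservation-law verifications or the $t=-1$ identification of the inverse.
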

\begin{proof}
 The generalized shear $F(z,w) = (z,w) + (v_1,v_2) f(\Lambda(z,w))$ is a flow of the vector field $(v_1,v_2) f(\Lambda(z,w))$ with the flow
$\Phi^X_t(z,w) = (z,w) + (v_1,v_2) t f(\Lambda(z,w)).$
  Similarly, the generalized overshear $G(z,w) =(z,w) + (v_1,v_2)(\bar{v}_1 z + \bar{v}_2 w)(e^{ f(\Lambda(z,w))} -1)$ is a time-one map of the vector field
  $Y(z,w)=(v_1,v_2)(\bar{v}_1 z + \bar{v}_2 w)f(\Lambda(z,w))$ with the flow
  $\Phi^Y_t(z,w) =(z,w) + (v_1,v_2)(\bar{v}_1 z + \bar{v}_2 w)(e^{t f(\Lambda(z,w))} -1).$
\end{proof}


\subsection{Derivatives of shears and overshears.}
Consider a shear $F^{\mu}(z,w) = (z,w) + (\mu,1) f(z - \mu w),$
with $f\in\mathcal{H}^1[u]$. Then, using the notation as in (\ref{diamond}), we have
$$
  DF^{\mu}(z,w)[h_1,h_2]:= \begin{bmatrix}
   h_1+\mu \whp_zf(z-\mu w)[h_1] & \mu \whp_w f(z-\mu w)[h_2]  \\
   \whp_z f(z-\mu w)[h_1] & h_2+\whp_w f(z-\mu w)[h_2]
          \end{bmatrix}.
$$ We would like to calculate the Jacobian, i.e. the Dieudonn\'{e} determinant of
  the above matrix and see if it is - as in the complex or real case -
  proportional to $h_1 h_2$ with  constant factor $1$.
  We may assume that
  $|h_1|=|h_2| = 1$ because of real linearity. Since Gaussian elimination
  of rows by using left multiplication is allowed and $\mu$ is real, we
  have (by a slight abuse of notation we write $\det_D$ also for the
  representative in the quotient)
\begin{eqnarray*}
  \det_D DF^{\mu}(z,w)[h_1,h_2] &=&  \left|
  \begin{matrix}
   h_1 & -\mu h_2  \\
   \whp_z f(z-\mu w)[h_1] & h_2-\mu \whp_z f(z-\mu w)[h_2]
   \end{matrix} \right|\\
   & =& h_1 h_2 - \mu h_1 \whp_z f(z-\mu w)[h_2] + \mu h_1 \whp_z f(z-\mu w)[h_1] (h_1)^{-1}h_2.
\end{eqnarray*}
The last two terms do not cancel out in general, but they do if $h_1 =
h_2.$ Therefore we could say that for $|h| = 1$ the determinant $\det_D
DF^j(z,w)[h,h] = 1,$ which means, that shears could be considered in a
way as volume preserving maps.
However, this property is no longer
preserved if we compose two shears or if $\mu$ is not real.

For instance, let $f(u) = u^2$ and consider $F^{\mu}$ as above. Recall that $\det_D A = 1$ precisely when
its representative has modulus $1.$
Even if we simplify the calculation by inserting $h = 1,$ we get
\begin{eqnarray*}
  \det_D DF^{\mu}(z,w)[1,1]&=& \left| \begin{matrix}
   1  & -\mu   \\
   2(z - \mu w)    & 1- (\mu(z - \mu w) + (z - \mu w)\mu)
          \end{matrix}\right| =
           \\
   &=& 1- (\mu(z-\mu w) - (z - \mu w)\mu).
\end{eqnarray*}
\noindent The number in the bracket is purely imaginary and so the
only possibility for such a number to have modulus $1,$ is, that the
term in the bracket vanishes for all $z$ and $w.$ This is iff $\mu
\in \mathbb{R}.$

Assume $\mu $ is real; in order to calculate the  derivative of the overshear flow
$$\Phi^Y(z,w) = (z,w) + (\mu,1)(\mu^2 + 1)^{-1}(\mu z+w) (e^{tf(z - \mu w) }-1)$$
of the vector field
$$Y(z,w) = (\mu,1)(z\mu + w)f(z - \mu w)(\mu^2 + 1)^{-1}$$ we notice first  that
$$\whp_we^{f(z - \mu w)}[h] = -\mu  \whp_ze^{tf(z - \mu w)}[h]$$
and then put
$$ (-\mu) A:=\whp_we^{f(z - \mu w)}[h] = -\mu  \whp_ze^{tf(z - \mu w)}[h] \quad B:= e^{tf(z - \mu w) }-1.$$ Then,
$$
  D\Phi^Y(z,w)[h,h]:= \begin{bmatrix}
   h+\frac{\mu}{\mu^2 + 1}(\mu h B + (z\mu+w)A)  &  \frac{\mu}{\mu^2 + 1}( h B - \mu (z\mu+w)A) \\
   \frac{1}{\mu^2 + 1}(\mu h B + (z\mu+w)A) & h+ \frac{1}{\mu^2 + 1}( h B - \mu (z\mu+w)A)
          \end{bmatrix}.
$$
After applying Gaussian elimination on rows, we see that
$$
  \det_D \Phi^Y(z,w)[h,h]= h  \left| \begin{matrix}
   1  & -\mu   \\
    \dfrac{\mu h B + (z\mu +w)A}{\mu^2 + 1}   & h+\dfrac{\mu h B - \mu (z\mu +w)A}{\mu^2 + 1}
          \end{matrix}\right| = h^2 e^{tf(z - \mu w)},
$$
  so we can say that the Dieudonn\'e
  determinant of  $\Phi^Y(z,w)$ is represented by
  the function  $V(z,w;t)=e^{tf(z - \mu w)}$ and in this case the function $V(z,w;t)$
  also solves the differential equation
$$
  \frac{d}{dt} V(z,w,t) = f(z - \mu w)V(z,w,t) ,  \; V(z,w,0) = 1,
$$
where $\div Y (z,w) = f(z - \mu w).$ Therefore we can say that overshears form a class of automorphisms which resemble the property of having volume and the quantity
$V$ resembles the volume at $\Phi^Y(z,w,t).$

\section{Andersen--Lempert theorem for automorphisms with volume}

As shown in the previous section any notion of volume and of volume-preserving maps are not well--defined in general
if one uses a definition which involves the notion of the determinat.
Therefore we prefer to use another approach and, as for
 the case automorphisms of $\mathbb{C}^n$, we consider
the volume-preserving automorphisms  to
be  those which are perturbations of the identity by vector fields with divergence.
\begin{definition}

  The space of {\em automorphisms with volume} is defined as
$$
  \Aut_V (\mathbb{H}^2) = \lbrace \Phi^X(z,w,1), \Div X(z,w)[h] = h \div X(z,w) \rbrace
$$
where $X$ is  a vector field with corresponding flow $\Phi^X$.
The space of {\em automorphisms with volume $1$} is defined as
$$
  \Aut_1 (\mathbb{H}^2) = \lbrace \Phi^X(z,w,1), \Div X(z,w)[h] = 0 \rbrace.
$$
\end{definition}
\noindent Examples in the previous sections show the remarkable fact  that

\[ \mathcal{S}_{{\mathbb{R}}} \subset  \Aut_1 (\mathbb{H}^2)\]

  but
\[ \mathcal{S}_{\mathbb{H}} \not\subset  \Aut_1 (\mathbb{H}^2).\]
  Similar conclusions hold for overshears and  generalized overshears.

\begin{example} { In the complex case for every automorphism $F(z,w) = (z,w) + h.o.t.,$ there is vector field
$X$ defined by the flow $\Phi(z,w,t) = F(tz,tw)/t.$ If $F$ is volume preserving,
 then $\div X = 0.$ The same holds for a composition of two automorphisms $F$ and $G$ and a corresponding associated flow.
This no longer holds true in the quaternionic case. After composing the
shears $F(z,w) = (z, w + z^2) $ and $G(z,w) = (z + w^2,w)$, one can
define as corresponding flow the mapping
$$
  \Phi(z,w,t) = F\circ G(tz,tw)/t = (z,w) + t(w^2,z^2)  + t^2(0, zw^2 + w^2 z) + t^3(0,w^4).
$$
The equation $d/dt (\Phi(z,e,t)) = X(\Phi(z,w,t),t)$ defines the time-dependent vector field $X(z,w,t) = \sum_0^{\infty} X_n(z,w) t^n.$
If the vector field $X$ is supposed to have  divergence, then all the vector fields $X_n$ should have divergence $0$, in particular, they should be bidegree full.
The defining equation in our case is then
$$
  (w^2,z^2)  + 2 t (0, zw^2 + w^2 z) + 3t^2(0,w^4) = \sum_0^{\infty} X_n(z + tw^2,w + tz^2 + t^2 (zw^2 + w^2 z) + t^3 w^4 ) t^n
$$ which by identity principle on $t$ implies $X_0(z,w) = (w^2,z^2),$
$X_1(z,w) + (w z^2 + z^2 w, z w^2 + w^2 z) = 2 (0, zw^2 + w^2 z).$ The
vector field $X_1(z,w) = (w z^2 + z^2 w,-z w^2 - w^2 z)$ is not
BF. Notice, that we do not claim that there does not exist another
divergence zero vector field $Y$ with the flow $\Phi^Y_t$ such that $F
\circ G = \Phi^Y_1.$ Therefore, we remark that in general a  finite composition of shears is an automorphism but
not necessarily a map with volume 1. In other words, it is possible
that a sufficiently small neighborhood of a finite composition of shears does not contain any other
  composition of shears.}

\end{example}
Having said that,  the following theorem is a direct application of the classical Ander-\-sen--Lempert theory as developed in \cite{AL}.
\begin{theorem}
  Every automorphism in   $\Aut_1 (\mathbb{H}^2)$
  can be approximated uniformly on
  compacts by finite composition of shears and overshears and every
  automorphism with volume $1$ can be approximated uniformly on
  compacts by a finite composition of shears.
\end{theorem}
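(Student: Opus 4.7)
The plan is to follow the classical Andersen--Lempert scheme, with the quaternionic technicalities already isolated in Theorem \ref{crcthm}, Corollary \ref{c1}, and Proposition \ref{p1}. By definition, an automorphism $F \in \Aut_V(\mathbb{H}^2)$ is the time-one map $\Phi^X(\cdot,\cdot,1)$ of some vector field $X$ with divergence, and lies in $\Aut_1(\mathbb{H}^2)$ exactly when $\div X \equiv 0$. The first step will be to approximate $X$ uniformly on a prescribed compact set $K \subset \mathbb{H}^2$ by a polynomial vector field $\widetilde X \in \mathcal{VH}^1$ that still has divergence (respectively, still has divergence zero). This is a truncation step: using the explicit BF parametrization given by Corollary \ref{c1}, one writes $X$ as a convergent series of the two fundamental families indexed by $p,q$, truncates at a sufficiently high total degree $N$, and then invokes Theorem \ref{crcthm} (via the corollary) to conclude that the truncation $\widetilde X$ is itself a vector field with divergence, and with $\div \widetilde X = 0$ whenever $\div X = 0$. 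Uniform $C^1$--convergence $\widetilde X \to X$ on $K$ follows from the absolute convergence of the original power series.

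The second step is to decompose $\widetilde X = Y_1 + Y_2 + \cdots + Y_m$ as a finite sum by applying Proposition \ref{p1}: if $\div \widetilde X \not\equiv 0$, the summands $Y_j$ lie in $SV_{\mathbb{R}} \cup OV_{\mathbb{R}}$, and if $\div \widetilde X \equiv 0$ one can take all summands in $SV_{\mathbb{R}}$. Each $Y_j$ has a complete real-time flow $\Phi^{Y_j}_t$, and by the lemma identifying time-one maps of shear/overshear vector fields with elements of $\mathcal{S}_{\mathbb{R}}$ and $\mathcal{O}_{\mathbb{R}}$, every $\Phi^{Y_j}_t$ is again a shear or overshear (the parameter $t$ is absorbed into the slice-regular data $f$).

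The third and main step is the Trotter--Kato product formula. The time-one flow of the sum $\widetilde X = \sum_j Y_j$ is approximated uniformly on compact subsets of $\mathbb{H}^2$ by the iterated composition
\[
\bigl(\Phi^{Y_m}_{1/n} \circ \Phi^{Y_{m-1}}_{1/n} \circ \cdots \circ \Phi^{Y_1}_{1/n}\bigr)^{\!n}, \qquad n \to \infty.
\]
Each factor in this composition is a shear or overshear by the previous step, so the whole iterated composition is a finite composition of shears and overshears (respectively, of shears only when $\div \widetilde X = 0$). The Baker--Campbell--Hausdorff error bound is purely a consequence of smoothness of the right-hand side of the associated real-time ODE on $\mathbb{H}^2 \cong \mathbb{R}^8$, and the uniform estimates of \cite{AL} apply verbatim. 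Finally, combining this with the Gronwall-type stability of flows with respect to $C^1$-perturbations of the vector field, one replaces $\Phi^{\widetilde X}_1$ by $\Phi^X_1 = F$ up to an arbitrarily small error on $K$.

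The step I expect to be the main obstacle is the first one, namely verifying that the polynomial approximation can be arranged so as to preserve the presence of divergence (or its vanishing). In the complex or real holomorphic setting this is transparent, but here the condition of having divergence is a non-trivial algebraic constraint on the coefficients of the power series, precisely the BF-symmetry encoded in Theorem \ref{crcthm} and Corollary \ref{c1}. Once one recognizes that Corollary \ref{c1} gives a free parametrization of vector fields with divergence by quaternionic coefficients $a_{p,q}, b_{p,q}$ and boundary data $f_0, g_0$, the truncation becomes obvious: cut off all coefficients of total degree exceeding $N$. Everything else is standard Andersen--Lempert, and the conclusion in the $\div X = 0$ case follows by the same argument with overshear summands forced to be absent.
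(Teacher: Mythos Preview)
Your proposal is correct and follows exactly the approach the paper has in mind: the paper's own proof consists of a single sentence stating that the result is ``a direct application of the classical Andersen--Lempert theory as developed in \cite{AL}'', and what you have written is precisely the unpacking of that sentence using the quaternionic ingredients prepared earlier (Theorem \ref{crcthm}, Corollary \ref{c1}, Proposition \ref{p1}). Your identification of the truncation step as the only place where the non-commutative setting requires care, and your resolution of it via the free parametrization in Corollary \ref{c1}, is exactly right.
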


\example In this example we show that the map $F(z,w) = (z e^{zw},
e^{-zw}w)$ from  example \ref{TheExample} is not approximable by
finite compositions of shears. It is, though,  a time one map of a
complete vector field, but this vector field does not have 
divergence.

The Taylor expansion of the mapping $F$ is of the form
$$
  F(z,w) = (z + z^2 w + \ldots, w - zw^2 + \ldots),
$$
where the dots indicate higher order terms.
Consider a generic composition of shears
$S = S^d \circ \ldots \circ S^1$
with
$$S^m(z,w) = (z,w) + (\mu_m,1)((z - \mu_m w)^2 a_{m,2} + (z - \mu_m w)^3 a_{m,3} + \ldots)$$
and let $S_n^m$ denote the term of order $n$ in its expansion.
Then the composition of shears $S$ up to the third order is of the form
$$id + \sum_{m=1}^k S^m_2 + \sum_{m=1}^k S_3^m + \tilde{S}_3,$$
where $\tilde{S}_3$ are  the rest of the terms of order $3$.
If $S$ is supposed to be approximating $F$, the terms of order $3$ of $S$ should approximate the term of order $3$ in the expansion of $F$ - the term
$(z^2w, - zw^2).$ Since the terms $S^m_n$ are all BF and the latter is not, the only possibility for approximating $F$  is that the missing terms come out from $\tilde{S}_3.$ However, terms of order $3$ arise  iff we compose some $S^m_2$ with a term of the form
$\id + T_2$ where $T_2$ are terms of order $2,$ which are all BF. So we have
\begin{eqnarray*}
   && (z - \mu_m w)^2 \circ ((z,w) + \sum_{n}(z - \mu_n w)^2(\mu_n,1) a_{n}) = \\
   &&\quad= ((z - \sum_n(z - \mu_n w)^2 \mu_n a_{n}) - \mu_m (w + \sum_n(z - \mu_n w)^2 a_{n}))\\
  &&\quad= ((z - \mu_m w) + \sum_n (z - \mu_n w)^2(\mu_n-\mu_m) a_{n})^2\\
  &&\quad= (z - \mu_m w)^2 + (z - \mu_m w)\sum_n(z - \mu_n w)^2(\mu_n-\mu_m) a_{n} +\\
  &&\quad  \quad+ \sum_n(z - \mu_n w)^2(\mu_n-\mu_m) a_{n} (z - \mu_m w) + \ldots \\
  &&\quad= (z - \mu_m w)^2 + \sum_n (\mu_n - \mu_m)[(z - \mu_m w) (z - \mu_n w)^2 a_n + (z - \mu_n w)^2a_n(z - \mu_m w)].
\end{eqnarray*}
We are interested in the terms in the square brackets with bidegree $(2,1).$ Those are
\begin{eqnarray*}
  &&(- \mu_m w z^2 -\mu_n z (z w + w z) )a_n + (-\mu_m z^2 a_n w - \mu_n(zw + wz) a_n z)\\
  &&\quad= -(w z^2 \mu_m a_n + z^2  w \mu_n a_n + zwz\mu_n a_n)  - (z^2 \mu_m a_n w + z w \mu_n a_n z + wz\mu_n a_n z).
\end{eqnarray*}
After summing up all possible choices we get 
\begin{eqnarray*}
   -w z^2 (\sum_n \mu_m a_n) - (z^2 w + zwz) ( \sum_n \mu_n a_n) +\\- z^2 (\sum_n \mu_m a_n)w - zw ( \sum_n \mu_n a_n)z - wz ( \sum_n \mu_n a_n)z.
\end{eqnarray*}
The bidegree full part can cancel out only terms with coefficients on the right. So if the above--given sums are not real, we can not get rid of the terms
$zw( \sum_n \mu_n a_n)z$ and $z^2 (\sum_n \mu_m a_n)w $. On the other hand, if the sums are real, we can rewrite the above expression as
$$
  ((\sum_n \mu_m a_n)-(\sum_n \mu_n a_n))(wz^2 + z^2w) - 2(\sum_n \mu_m a_n) zwz.
$$
We observe that  bidegree polynomials with degree $d=3$
can not cancel out the term $w z^2$ in the first component of the mapping without cancelling also the term $z^2w.$
So, the conclusion is, that $F$ cannot be approximated by a composition of shears.
Finally, we remark that in the above considerations,
the monomials $wz^2, zwz$ and $ z^2w$ are not just formally linearly independent, but also linearly independent as functions.

\end{document}